\newtheorem{theorem}{Theorem}[section] 
\newtheorem{lemma}[theorem]{Lemma}
\newtheorem{conjecture*}[theorem]{Conjecture}
\newtheorem{proposition}[theorem]{Proposition}
\newtheorem{corollary}[theorem]{Corollary}
\newtheoremstyle{notauto}{}{}{\itshape}{}{\bfseries}{.}{0.5em}{\thmnote{#3}}
\theoremstyle{notauto}
\theoremstyle{definition}
\newtheorem{definition}[theorem]{Definition}
\newtheorem{example}[theorem]{Example}
\theoremstyle{remark}
\newtheorem{remark}[theorem]{Remark}
\begin{document}

\baselineskip20pt

\title[Finite groups having nonnormal T.I. subgroups] 
{Finite groups\\ having nonnormal T.I. subgroups }
\author{{M.Yas\.{I}r} K{\i}zmaz }
\address{Department of Mathematics, Middle East Technical University, Ankara 06531, Turkey}
\email{yasir@metu.edu.tr}
\thanks{This work has been supported by T\"{U}B\.{I}TAK-B\.{I}DEB 2211/A}
\subjclass[2010]{20D10, 20D20}
\keywords{T.I. subgroup, Frobenius group, Double Frobenius group, normal complement}
\maketitle

\begin{abstract} In the present paper, the structure of a finite group $G$ having a nonnormal T.I. subgroup $H$ which is also a Hall $\pi$-subgroup is studied. As a generalization of a result due to Gow, we prove that $H$ is a Frobenius complement whenever $G$ is $\pi$-separable. This is achieved by obtaining the fact that Hall T.I. subgroups are conjugate in a finite group. We also prove two theorems about normal complements one of which generalizes a classical result of Frobenius.
\end{abstract}

\section{Introduction}

In a group $G$, a subgroup $H$ is called a \textit{ T.I. subgroup} if $H\cap H^x =H$ or trivial for any $x\in G$. Clearly, any normal subgroup is also a T.I. subgroup but it is not very interesting beside normality. Hence, most of the time we assume that $H$ is a nonnormal T.I. subgroup.

Throughout the article, we assume that all groups are finite. In one of his celebrated works, Frobenius proved that a self normalizing T.I. subgroup $H$ of a group $G$ has a normal complement in $G$ by using his theory of induced characters. In that case, $H$ is called a Frobenius complement. Moreover, such groups $G$ are named as Frobenius groups after him and they are well studied. 

If a group $H$ acts on a group $K$ by automorphisms with the property that every nonidentity element of $H$ acts on $K$ fixed point freely, the action of $H$ on $K$ is called as \textit{Frobenius}. In that case, the group $KH$ is a Frobenius group with complement $H$, and so we also call $H$ as a Frobenius complement if there exists a group on which $H$ has a Frobenius action.

A result due to Burnside is that Sylow subgroups of Frobenius complements are cyclic or generalized quaternion. Notice that Frobenius complements are Hall subgroups of Frobenius groups, and so there are two natural questions to ask: 

\textit{Assume that $H$ is a Hall subgroup which is also a nonnormal T.I. subgroup of the group $G$.}\\
\textbf{Question A } \textit{Under what conditions is $H$ a Frobenius complement ?}\\
\textbf{Question B } \textit{Under what conditions does $H$ have a normal complement in $G$ ?}\\

In \cite{1} Gow obtained a partial answer to Question A, namely, he proved the following:

\begin{theorem}(Gow) \label{Gow's Theorem} Let $H$ be a Hall subgroup of the solvable group $G$ and suppose that $H$ is a nonnormal T.I. subgroup of G. Then H has an irreducible representation on some elementary abelian section of $G$ in which each of its nonidentity elements acts without fixed points.
\end{theorem}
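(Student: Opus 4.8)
The plan is to induct on $|G|$, and the first move is to understand how a minimal normal subgroup $M$ of the solvable group $G$ interacts with $H$. First I would record the basic consequence of the T.I.\ hypothesis that $C_G(h)\le N_G(H)$ for every nonidentity $h\in H$: if $x$ centralizes $h$ then $h\in H\cap H^x$, so this intersection is nontrivial and the T.I.\ property forces $H=H^x$. The same idea shows that $M$ cannot be a $\pi$-group: if it were, then, being a normal $p$-subgroup with $p\in\pi$, it would lie in every Sylow $p$-subgroup and hence in $H$, and then $1\ne M\le H\cap H^x$ for all $x$ would make $H$ normal, contrary to hypothesis. Thus $M$ is an elementary abelian $p$-group with $p\notin\pi$ and $M\cap H=1$, so $H$ acts coprimely on $M$.

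Next I would split on whether $HM$ is normal in $G$. If $HM\not\trianglelefteq G$, I pass to $\overline{G}=G/M$ and $\overline{H}=HM/M\cong H$ (the isomorphism holding because $H\cap M=1$). The key point here is that $\overline{H}$ is again a nonnormal T.I.\ Hall $\pi$-subgroup of $\overline{G}$; nonnormality is exactly the case assumption, and the T.I.\ property I would verify by lifting: a nontrivial element of $\overline{H}\cap\overline{H}^{\overline{x}}$ produces a nontrivial $\pi$-subgroup $Q$ of $HM\cap H^xM$, and by conjugacy of Hall $\pi$-subgroups in the solvable groups $HM$ and $H^xM$ one may conjugate $Q$ into $H$ and into $H^x$ by elements of $M$; the T.I.\ property of $H$ then forces $HM=H^xM$, i.e.\ $\overline{H}=\overline{H}^{\overline{x}}$. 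Induction applied to $\overline{G}$ yields an irreducible fixed-point-free section, which lifts to a section of $G$ on which $H\cong\overline{H}$ acts in the same way.

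The remaining case is $HM\trianglelefteq G$, where the Frattini argument gives $G=MN$ with $N=N_G(H)$. If $HM<G$ I simply apply induction to $HM$, after checking that $H$ is still nonnormal there: were $H\trianglelefteq HM=M\rtimes H$, it would centralize $M$, forcing $M\le N$, hence $G=N$ and $H\trianglelefteq G$. This leaves the genuine base case $G=M\rtimes H$, which I expect to be the heart of the argument. Here I would first compute that $M\cap N=C_M(H)$: in the semidirect product an element $m\in M$ normalizes $H$ precisely when conjugation by $m$ leaves the $H$-part of each $h\in H$ unchanged, which happens exactly when $m$ is fixed by every $h$. Combining this with $C_M(h)\le M\cap N_G(H)=C_M(H)$ (from the T.I.\ fact above) and the trivial inclusion $C_M(H)\le C_M(h)$ gives the striking equality $C_M(h)=C_M(H)$ for every nonidentity $h\in H$.

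Finally, using coprimality I decompose $M=C_M(H)\times[M,H]$, and since $H$ does not centralize $M$ the module $V:=[M,H]$ is nontrivial. For $1\ne h\in H$ one has $C_V(h)=C_M(h)\cap V=C_M(H)\cap V=1$, so $H$ acts fixed-point-freely on $V$. Passing to a composition factor $W$ of the $\FF_p[H]$-module $V$ keeps the action irreducible and, because the characteristic polynomial of each $h$ multiplies over composition factors, $1$ remains a non-eigenvalue of every nonidentity $h$ on $W$; hence $H$ acts fixed-point-freely and irreducibly on the elementary abelian section $W$, as desired. I expect the base case to be the main obstacle: getting from ``no global fixed points'' to ``no nonidentity element has fixed points'' is exactly where the T.I.\ hypothesis does its work, through the identity $C_M(h)=C_M(H)$, and some care is also needed in the quotient step to confirm that the T.I.\ property descends to $G/M$.
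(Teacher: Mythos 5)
Your argument is correct, and it takes a genuinely different (and more elementary) route than the paper. The paper never proves Theorem \ref{Gow's Theorem} directly: it is quoted from Gow and recovered as a special case of Theorem \ref{Gen. of Gow}, whose proof works from the top of the $\pi$-structure of $G$ --- it needs Proposition \ref{proposition: key pro} (conjugacy of Hall T.I.\ subgroups), the lemmas on normalizers and images of T.I.\ subgroups modulo a coprime normal subgroup, the factorization $G=O_{\pi'}(G)N_G(H)$, and then a minimal-counterexample reduction to $G=PH$ with $P=O_{\pi'}(G)$ a $p$-group, where $C_{P/P'}(H)=1$ exhibits $H$ as a self-normalizing T.I.\ subgroup of $(P/P')H$, hence a Frobenius complement. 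You instead induct from the bottom on a minimal normal subgroup $M$: your quotient step reproves exactly the instance of Lemma \ref{lemma: image of T.I} you need, but using only Hall's conjugacy theorem in solvable groups, so Proposition \ref{proposition: key pro} is never required; and your base case replaces the appeal to ``self-normalizing T.I.\ implies Frobenius'' by the explicit chain $C_M(h)\le M\cap N_G(H)=C_M(H)\le C_M(h)$ followed by the coprime splitting $M=C_M(H)\times[M,H]$ and passage to a composition factor. The two base cases rest on the same engine (the T.I.\ consequence $C_G(h)\le N_G(H)$), but yours is self-contained for solvable $G$, while the paper's heavier machinery is precisely what buys the extension to $\pi$-separable groups, where Hall $\pi$-subgroups are not a priori conjugate. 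A small simplification you could add: since $C_M(H)$ is normal in $G=MH$ ($M$ being abelian) and proper in $M$, minimality of $M$ already forces $C_M(H)=1$, so in your base case $V=M$ and the direct-product step is not needed.
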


 In the present paper, we obtain an extension of his result to $\pi$-separable groups as a more general answer to Question A. We prove the following:

\begin{theorem}
    \label{Gen. of Gow}
Let $H$ be a nonnormal T.I. subgroup of the $\pi$-separable group $G$ where $\pi$ is the set of primes dividing the order of $H$. Further assume that $H$ is a Hall subgroup of $N_G(H)$. Then the following hold:\\
    $a)$ $G$ has  $\pi$-length $1$ where $G=O_{\pi'}(G)N_G(H)$;\\
    $b)$ there is an $H$-invariant section of $G$ on which the action of $H$ is Frobenius. This section can be chosen as a chief factor of $G$ whenever $O_{\pi'}(G)$ is solvable;\\
    $c)$ $G$ is solvable if and only if $O_{\pi'}(G)$ is solvable and $H$ does not involve a subgroup isomorphic to $SL(2,5)$.
\end{theorem}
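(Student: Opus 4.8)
The plan is to prove the three parts essentially in order, since each builds on the previous. The setup gives me a nonnormal T.I. subgroup $H$ that is a Hall subgroup of $N_G(H)$ in a $\pi$-separable group $G$, where $\pi=\pi(H)$. My first goal is part (a). Since $H$ is a T.I. subgroup, distinct conjugates of $H$ intersect trivially, and a standard counting argument (the hallmark of T.I. analysis) lets me count the elements lying in the conjugates of $H$. Because $G$ is $\pi$-separable, $O_{\pi'}(G)$ is nontrivial or I can pass to a quotient; the key is to show $G=O_{\pi'}(G)N_G(H)$, i.e.\ that $O_{\pi'}(G)$ acts transitively on the conjugates of $H$. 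I would argue that $H$ is in fact a Hall $\pi$-subgroup of $G$: since $G$ is $\pi$-separable it has Hall $\pi$-subgroups, and using the conjugacy of Hall T.I.\ subgroups (the result the abstract advertises as already established) together with the Hall structure of $N_G(H)$, I can identify $H$ with a full Hall $\pi$-subgroup. Then a Frattini-type argument applied to $O_{\pi'}(G)$ and the Schur--Zassenhaus / Hall conjugacy inside $G/O_{\pi'}(G)$ should yield $G=O_{\pi'}(G)N_G(H)$ and force $\pi$-length $1$.

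For part (b) I want an $H$-invariant section on which $H$ acts fixed-point-freely (Frobenius). The natural candidate is built inside $O_{\pi'}(G)$: the T.I.\ condition says that any nonidentity element of $H$ can fix no nonidentity $\pi'$-element in a relevant section, which is exactly the fixed-point-free condition. Concretely, I would look at the action of $H$ on $O_{\pi'}(G)$ and use coprimality. By coprime action theory, if $H$ acted with a nontrivial fixed point on every chief factor below $O_{\pi'}(G)$, then $H$ would centralize $O_{\pi'}(G)$ by the standard result that a coprime action trivial on all chief factors is trivial; this would contradict nonnormality of $H$ via part (a). So there must exist a chief factor $V$ of $G$ inside $O_{\pi'}(G)$ on which $C_V(H)$ behaves correctly. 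To upgrade ``some nontrivial action'' to ``Frobenius action,'' I invoke the T.I.\ hypothesis directly: if $1\neq h\in H$ fixed a nonzero vector $v$ in the section, the stabilizer structure would produce a nontrivial element in $H\cap H^x$ for suitable $x$ coming from the section, contradicting T.I. This is where the chief-factor refinement needs $O_{\pi'}(G)$ solvable, because then the chief factors are elementary abelian, so $V$ is a genuine module and ``Frobenius action'' coincides with ``no nonidentity element has a nonzero fixed vector.'' Without solvability I only get a section, not a chief factor, which matches the stated dichotomy.

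Part (c) is where the real work and the main obstacle lie. One direction is immediate: if $G$ is solvable then $O_{\pi'}(G)$ is solvable, and $SL(2,5)$ is nonsolvable so it cannot be involved. For the converse, I assume $O_{\pi'}(G)$ is solvable and $H$ involves no $SL(2,5)$, and I must deduce $G$ solvable. By part (a), $G=O_{\pi'}(G)N_G(H)$ with $O_{\pi'}(G)$ solvable, so it suffices to show $N_G(H)$ is solvable, and since $H$ is a Hall $\pi$-subgroup of $N_G(H)$ with solvable complement structure, this reduces to showing $H$ itself is solvable. Here part (b) is the crucial input: $H$ acts as a Frobenius complement on some section, so by Burnside's theorem its Sylow subgroups are cyclic or generalized quaternion. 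The classification of Frobenius complements (Zassenhaus) says that a Frobenius complement is either solvable (metacyclic-type / has a normal complement structure) or its unique nonsolvable composition factor forces a subgroup isomorphic to $SL(2,5)$. Therefore, excluding $SL(2,5)$ precisely rules out the only nonsolvable possibility, giving $H$ solvable and hence $G$ solvable. \textbf{The main obstacle} I anticipate is justifying part (b)'s Frobenius action rigorously enough that the Zassenhaus classification applies in part (c): I must ensure the section produced genuinely realizes $H$ as a Frobenius complement (not merely a fixed-point-free action on a non-faithful section), so I will need to replace $H$ by its image acting faithfully and check that a kernel, if present, is itself controlled by the T.I.\ and Hall hypotheses. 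Managing that faithfulness reduction, together with invoking Zassenhaus' structure theorem cleanly, is the delicate technical heart of the argument.
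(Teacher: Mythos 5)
Your overall strategy for part (a) and the Zassenhaus step in part (c) matches the paper, but there are two genuine gaps. First, in part (b) your mechanism for locating the section is broken: the claim that a nontrivial fixed point on every chief factor below $O_{\pi'}(G)$ would force $H$ to centralize $O_{\pi'}(G)$ is false (the standard coprime-action fact requires $[V,H]=1$ on every factor, which is much stronger than the existence of a fixed point), and your ``upgrade'' to a Frobenius action is also incomplete: if $1\neq h\in H$ fixes $v$, then $h\in H\cap H^{v}$ and the T.I.\ property yields only $H=H^{v}$, i.e.\ $v\in N_G(H)$; this is a contradiction only after one has already arranged that $N(H)$ meets the section trivially, equivalently $C_V(H)=1$. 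The paper does exactly this: it reduces, by a minimal counterexample together with the existence of $H$-invariant Sylow subgroups of $O_{\pi'}(G)$, to the case $G=PH$ with $P=O_{\pi'}(G)$ a $p$-group, shows $[P,H]=P$ and hence $C_{P/P'}(H)=1$ from the coprime decomposition of the abelian group $P/P'$, and only then concludes that $H$ is a self-normalizing T.I.\ subgroup of $(P/P')H$, hence a Frobenius complement; in the solvable case it takes $L=[O_{\pi'}(G),H]$ and a chief factor of $G$ between $L$ and $L'$. Note also that your chief-factor argument offers no mechanism at all for producing a section when $O_{\pi'}(G)$ is nonsolvable, which is precisely where the reduction to a $p$-group is needed.

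Second, in part (c) the phrase ``solvable complement structure'' hides an unproved claim. Writing $N_G(H)=HQ$, the solvability of $G$ requires the solvability of $Q$ modulo $O_{\pi'}(G)$, and this is not automatic from the solvability of $H$. The paper proves it: since $O_{\pi'}(G/O_{\pi'}(G))=1$, the image $\overline Q$ acts faithfully on $\overline H$; for each $p\in\pi$ one chooses a $\overline Q$-invariant Sylow $p$-subgroup $P$ of $\overline H$, notes that $\overline Q/C_{\overline Q}(P)$ embeds in $Aut(P)$, which is solvable because $P$ is cyclic or generalized quaternion, and that $\bigcap_{p}C_{\overline Q}(P)=1$ by faithfulness. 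Without this step your reduction of the solvability of $G$ to the solvability of $H$ does not go through.
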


    Parts $(a)$ and $(b)$ of Theorem \ref{Gen. of Gow} not only generalize Theorem \ref{Gow's Theorem} but also determine the structure of the group $G$ under the weaker hypothesis that $H$ is a Hall subgroup of $N_G(H)$. Yet, it turns out to be equivalent to assuming that $H$ is a Hall subgroup of $G$. (See Lemma \ref{lemma: norm}). Here is an example showing that the condition of $\pi$-separability is indispensable. %where $\pi$ is the prime set of $H$
    
    \begin{example}
        Let $K=A_5$ and $H$ be a Sylow $5$-subgroup of $K$. Clearly, $H$ is a T.I. subgroup of $K$ and a Hall subgroup of $N_K(H)$. The $H$-invariant subgroups of $K$ are only $N_K(H)$, $H$ and the trivial subgroup where $|N_K(H)|=10$. Hence, it is easy to see that both $(a)$ and the first part of Theorem \ref{Gen. of Gow} $(b)$  fail to be true.
        \end{example}
        We next present an example which shows that the second part of Theorem \ref{Gen. of Gow} (b) is not true in case $O_{\pi'}(G)$ is nonsolvable even if $G$ satisfies other hypotheses.
    \begin{example}

    Let $N=SL(2,2^7)$ and $\alpha\in Aut(N)$  of order $7$ which arises from the nontrivial automorphism of the field with $2^7$ elements. Set $G=N \langle \alpha \rangle$. Then $\langle \alpha \rangle$ is a nonnormal Hall T.I. subgroup of $G$. Since $N$ is a simple, $\langle \alpha \rangle$ does not have Frobenius action on any chief factor of $G$, and so the second part of Theorem \ref{Gen. of Gow} $(b)$ fails to be true.    \end{example}

We also studied the structure of groups having a nonnormal T.I. subgroup which is also a Hall subgroup, and obtained the following result about the conjugacy of such Hall subgroups. This proposition plays a crucial role in simplifying many proofs, throughout the article.

\begin{proposition}
\label{proposition: key pro}
    
    Let $G$ be a group containing a Hall $\pi$-subgroup $H$ which is also a T.I. subgroup. Then any $\pi$-subgroup of $G$ is contained in a conjugate of $H$. In particular, the set of all Hall $\pi$-subgroups of $G$ forms a single $G$-conjugacy class.
    
\end{proposition}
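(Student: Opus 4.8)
The plan is to reduce everything to a single covering statement at the level of elements and then bootstrap to subgroups by a short orbit-counting argument. Throughout I use the two structural consequences of the hypotheses. First, since $H$ is a T.I. subgroup, any two \emph{distinct} conjugates of $H$ meet trivially: if $H^a\cap H^b\neq 1$ then $H\cap H^{ba^{-1}}\neq 1$, so by the T.I. property $H^{ba^{-1}}=H$, whence $ba^{-1}\in N_G(H)$ and $H^a=H^b$. Second, since $H$ is a Hall $\pi$-subgroup, for every conjugate $L=H^g$ the index $[N_G(L):L]$ divides $[G:L]$ and is therefore a $\pi'$-number. This gives the basic ``normalizer trap'': if a $\pi$-subgroup $K$ normalizes some conjugate $L$, then $KL/L$ is a $\pi$-subgroup of the $\pi'$-group $N_G(L)/L$, hence trivial, so in fact $K\le L$. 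Thus it always suffices to make a $\pi$-subgroup merely \emph{normalize} a conjugate of $H$.

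First I would prove the element version: every $\pi$-element $x$ lies in a (necessarily unique) conjugate of $H$. Pick a prime $p\in\pi$ dividing $|x|$ and let $x_p\neq 1$ be the $p$-part of $x$. Because $H$ is a Hall $\pi$-subgroup it contains a full Sylow $p$-subgroup of $G$, so by Sylow's theorem every Sylow $p$-subgroup, and hence $x_p$, lies in some conjugate $L_0=H^{g}$; by the T.I. property $L_0$ is the \emph{unique} conjugate containing $x_p$. Since $x$ commutes with $x_p$, we have $x_p=xx_px^{-1}\in xL_0x^{-1}$, and uniqueness forces $xL_0x^{-1}=L_0$, i.e. $x\in N_G(L_0)$; the normalizer trap then yields $x\in L_0$. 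Combined with the T.I. property, this shows that the nonidentity $\pi$-elements of $G$ are partitioned by the blocks $L\setminus\{1\}$ as $L$ ranges over the conjugates of $H$, each block having size $|H|-1$.

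Now let $K$ be an arbitrary nontrivial $\pi$-subgroup. By the previous paragraph each nonidentity element of $K$ lies in exactly one conjugate of $H$, so $K\setminus\{1\}$ is partitioned by the nonempty intersections $(K\cap L_i)\setminus\{1\}$, where $L_1,\dots,L_r$ are the conjugates meeting $K$ nontrivially. The group $K$ permutes $\{L_1,\dots,L_r\}$ by conjugation; the stabilizer of $L_i$ is $K\cap N_G(L_i)=K\cap L_i$ (again by the normalizer trap), so the $K$-orbit of $L_i$ has length $[K:K\cap L_i]=|K|/|K\cap L_i|$, and $\beta_i:=|K\cap L_i|-1$ is constant on orbits. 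Writing the identity $\sum_i\beta_i=|K|-1$ as a sum over the $K$-orbits and dividing by $|K|$ gives $\sum_{\text{orbits}}\frac{\beta}{\beta+1}=1-\frac{1}{|K|}<1$. Since $\beta\ge 1$ on each nonempty block, every summand is at least $\tfrac12$, so there can be only one orbit; solving the resulting equation then forces $\beta=|K|-1$, i.e. $K\le L_1$. This proves the first assertion, and the ``in particular'' statement is immediate: any Hall $\pi$-subgroup has the same order as $H$ and, being contained in a conjugate of $H$, must equal it.

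The step I expect to be the main obstacle is exactly this last passage from elements to subgroups. A $\pi$-subgroup need not be cyclic, abelian, or even have nontrivial center (for example $S_3$ as a $\{2,3\}$-group), so one cannot simply locate a central element whose unique conjugate is normalized by all of $K$, nor can one invoke a naive fixed-point count: a $\pi$-group acting on a set of $\pi'$-cardinality can be fixed-point-free. The orbit-counting inequality above is precisely what circumvents this difficulty, converting the trivial-intersection bookkeeping into the conclusion that $K$ fixes a conjugate of $H$. Everything else—the Sylow input and the normalizer trap—is routine given the Hall and T.I. hypotheses.
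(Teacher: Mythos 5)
Your proof is correct, and it reaches the conclusion by a genuinely different route than the paper. The paper attacks a putative counterexample $K$ directly: it places a Sylow $p$-subgroup of $K$ inside a conjugate $H^x$, shows the trace $T=H^x\cap K$ is a self-normalizing T.I. (hence Hall) subgroup of $K$, repeats this with a prime $q$ dividing $|K:T|$ to get a second trace $S$ whose $K$-conjugates are disjoint from those of $T$, and counts the elements of $K$ covered by the two families to obtain $1\le 1/|S|+1/|T|$, whence $|S|=|T|=2$ and the contradiction that $q=2$ divides $|T|$. You instead isolate an element-level statement the paper never states --- every nonidentity $\pi$-element lies in a unique conjugate of $H$, proved via the $p$-part trick $x_p=xx_px^{-1}\in xL_0x^{-1}$ --- and then run a single orbit count over all conjugates meeting $K$. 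The underlying arithmetic is essentially the same (your summand $\beta/(\beta+1)$ equals $1-1/|K\cap L_i|$, so two orbits would reproduce the paper's inequality $1<1/|T|+1/|S|$), but your organization buys two things: you need neither a second prime nor the terminal case analysis at $|S|=|T|=2$, since the one-orbit equation immediately forces $K\le L_1$; and the partition of the nonidentity $\pi$-elements of $G$ by the conjugates of $H$ is a clean, reusable byproduct. Both proofs rest on the same ``normalizer trap,'' which in the paper appears as the step that $n\in N_K(T)$ normalizes $H^x$ and hence lies in $H^x$.
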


In the following normal complement theorem we obtained more information on the structure of a $\pi$-separable group $G$ having a nonnormal T.I. subgroup.

\begin{theorem}
    \label{complement theorem}
    Assume that the hypothesis of Theorem \ref{Gen. of Gow} holds. Assume further that a Sylow $2$-subgroup of $H$ is abelian and $Q$ is a complement of $H$ in $N_G(H)$. Then $C_H(Q)$ is a Hall subgroup of $G$ having a normal complement in $G$.
\end{theorem}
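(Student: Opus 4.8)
The plan is to reduce modulo $N:=O_{\pi'}(G)$, to recognize $C:=C_H(Q)$ as the fixed points of a coprime action, and to build the normal complement explicitly on the quotient $\overline G:=G/N$ before lifting it back. First I would record what the hypotheses give. Since $H$ is a Frobenius complement by Theorem \ref{Gen. of Gow}(b) and its Sylow $2$-subgroup is abelian, the quoted result of Burnside (Sylow subgroups of Frobenius complements are cyclic or generalized quaternion) forces the Sylow $2$-subgroup to be cyclic; as the odd Sylow subgroups of a Frobenius complement are always cyclic, \emph{every} Sylow subgroup of $H$ is cyclic, so $H$ is a metacyclic $Z$-group, in particular solvable. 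By Theorem \ref{Gen. of Gow}(a) we have $G=N\,N_G(H)$ and $\pi$-length $1$, so in $\overline G$ the image $\overline H:=HN/N\cong H$ is the unique, hence normal, Hall $\pi$-subgroup; writing $\overline Q:=QN/N$ one gets $\overline G=\overline H\rtimes\overline Q$ with $\overline Q$ a Hall $\pi'$-subgroup. Because $Q$ is a $\pi'$-group it acts coprimely on $H$, and the natural isomorphism $H\cong\overline H$ is equivariant and identifies $C=C_H(Q)$ with $C_{\overline H}(\overline Q)$ (using $H\cap N=1$, so that $h^{q}\equiv h \pmod N$ forces $h^{q}=h$).

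The heart of the Hall assertion is a dichotomy that holds precisely because the Sylow subgroups are cyclic: a nontrivial coprime automorphism of a cyclic $p$-group is fixed-point-free, so for any $\overline Q$-invariant Sylow $p$-subgroup $\overline P_p$ of $\overline H$ one has $C_{\overline P_p}(\overline Q)\in\{1,\overline P_p\}$. By standard coprime-action theory $C_{\overline P_p}(\overline Q)$ is a Sylow $p$-subgroup of $C_{\overline H}(\overline Q)$, so $C_{\overline H}(\overline Q)$ is a product of full Sylow subgroups of $\overline H$, i.e.\ a Hall $\rho$-subgroup for $\rho:=\{\,p:\overline Q\text{ centralizes }\overline P_p\,\}$. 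Transporting back, $C$ is a Hall $\rho$-subgroup of $H$, and since $H$ is a Hall $\pi$-subgroup of $G$, the subgroup $C$ is a Hall subgroup of $G$.

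For the complement I would fix a $\overline Q$-invariant Sylow system of the solvable group $\overline H$, obtaining $\overline Q$-invariant Hall subgroups $\overline C=C_{\overline H}(\overline Q)$ (the Hall $\rho$-subgroup) and $\overline D$ (the Hall $\rho'$-subgroup), with $\overline H=\overline C\,\overline D$ and $\overline C\cap\overline D=1$. Since $\overline Q$ centralizes $\overline C$ and, by the dichotomy, acts fixed-point-freely on $\overline D$, writing a typical $h=cd$ with $c\in\overline C$, $d\in\overline D$ gives $[h,q]=d^{-1}d^{\,q}\in\overline D$ for all $q\in\overline Q$, so $[\overline H,\overline Q]=\overline D$; in particular $\overline D=[\overline H,\overline Q]\trianglelefteq\overline G$. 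Set $\overline K:=\overline D\,\overline Q=[\overline H,\overline Q]\,\overline Q$. This is a subgroup (as $\overline D\trianglelefteq\overline G$), and it is normal in $\overline G$ because $\overline Q\le\overline K$, $\overline D\le\overline K$, and $\overline C$ both normalizes $\overline D$ and centralizes $\overline Q$. Since $\overline G=\overline H\,\overline Q=\overline C\,\overline D\,\overline Q=\overline C\,\overline K$ and $\overline C\cap\overline K=\overline C\cap\overline D=1$, the subgroup $\overline K$ is a normal complement to $\overline C$ in $\overline G$ (indeed a Frobenius group when $\overline D\neq1$). Pulling back, the full preimage $K$ of $\overline K$ satisfies $N\le K\trianglelefteq G$, $G=CK$ and $C\cap K\le C\cap N=1$, so $K$ is the required normal complement to the Hall subgroup $C$.

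The main obstacle is the Hall property, and this is exactly where abelian Sylow $2$ is indispensable: for a generalized quaternion Sylow $2$-subgroup a coprime automorphism of order $3$ fixes the central involution, so the dichotomy $C_{\overline P_p}(\overline Q)\in\{1,\overline P_p\}$ fails, $C_H(Q)$ can cut a Sylow subgroup in half, and both the Hall-ness of $C$ and the clean identity $[\overline H,\overline Q]=\overline D$ underpinning the complement break down. Once the cyclic-Sylow dichotomy is secured, the remaining steps are routine coprime-action bookkeeping, and the passage between $G$ and $G/O_{\pi'}(G)$ is immediate from Theorem \ref{Gen. of Gow}(a).
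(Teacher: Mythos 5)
Your proof is correct and follows essentially the same route as the paper: the Hall property of $C_H(Q)$ comes from the cyclic Sylow structure of the Frobenius complement $H$ under the coprime action of $Q$, and your normal complement --- the preimage of $[\overline H,\overline Q]\,\overline Q$ --- is exactly the subgroup $O_{\pi'}(G)[H,Q]Q$ that the paper writes down directly in $G$. The only real difference is presentational: the paper simply cites Proposition \ref{proposition: Fusion proposition} for the Hall-ness of $C_H(Q)$ and the coprime decomposition $H=[H,Q]\rtimes C_H(Q)$, whereas you re-derive this content inline via $Q$-invariant Hall subgroups and the fixed-point-free dichotomy on cyclic Sylow $p$-subgroups.
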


This theorem is obtained by using the result below which is of independent interest too.
\begin{proposition}
        \label{proposition: Fusion proposition}
        
        Let $A$ be a group acting coprimely on $G$ by automorphisms. Assume that Sylow subgroups of $G$ are cyclic. Then,\\
        $a)$ $C_G(A)$ is a Hall subgroup of $G$;\\
        $b)$ $G=[G,A]\rtimes C_G(A)$;\\
        $c)$ the group $[G,A]$ is cyclic.
\end{proposition}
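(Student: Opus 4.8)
The plan is to first reduce to the structure theory of groups with cyclic Sylow subgroups (Z-groups) and then track the coprime action through that structure. Since every Sylow subgroup of $G$ is cyclic, the Burnside--Zassenhaus theorem applies: $G$ is metacyclic, $G' = [G,G]$ is cyclic, $G/G'$ is cyclic, and $\gcd(|G'|,|G/G'|)=1$, so $G'$ is a normal cyclic Hall subgroup with a cyclic complement; in particular $G$ is solvable. Because $G'$ is characteristic it is $A$-invariant, and being a normal Hall subgroup it admits, by the standard coprime-action form of Schur--Zassenhaus, an $A$-invariant complement $K$. Thus I may write $G = G' \rtimes K$ with $G'$ and $K$ cyclic, of coprime orders, and both $A$-invariant.

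For part $(a)$, I would first record a dichotomy for the coprime action of $A$ on any cyclic Sylow $p$-subgroup $P$: either $A$ centralizes $P$ or $C_P(A)=1$. When $p=2$ the coprime hypothesis forces $|A|$ odd while $\operatorname{Aut}(P)$ is a $2$-group, so $A$ acts trivially; when $p$ is odd, $\operatorname{Aut}(P)$ is cyclic and its restriction to the socle $\Omega_1(P)\cong \mathbb{Z}_p$ is injective on $p'$-elements, so a nontrivial $A$ has no nonzero fixed point on the socle, whence $C_P(A)=1$ because every nontrivial subgroup of a cyclic group contains the socle. Combined with the standard fact that a Sylow $p$-subgroup of $C_G(A)$ has the form $C_P(A)$ for a suitable $A$-invariant Sylow $p$-subgroup $P$, this shows $C_G(A)$ contains, for each prime, either a full Sylow subgroup or none, i.e.\ $C_G(A)$ is a Hall subgroup.

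Next I would decompose the two cyclic factors. Coprime action on the abelian groups $G'$ and $K$ gives $G' = U\times V$ and $K = R\times S$, where $U=C_{G'}(A)$, $V=[G',A]$, $R=C_K(A)$, $S=[K,A]$ are all cyclic, and $U,V$ are normal in $G$ (characteristic in the cyclic group $G'$, hence characteristic-in-normal). Using the unique factorization $G=G'K$ I would check that $g=pk$ lies in $C_G(A)$ precisely when $p\in U$ and $k\in R$, so $C_G(A)=UR$. For the commutator subgroup, the identity $[pk,x] = \bigl(k^{-1}[p,x]k\bigr)[k,x]$ --- valid because $V=[G',A]$ is normal in $G$ and $K$ is $A$-invariant, so that $[k,x]\in[K,A]=S$ --- shows every generator of $[G,A]$ lies in $VS$, giving $[G,A]=VS$ (the reverse inclusion is immediate from $V,S\subseteq[G,A]$). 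Part $(b)$ then follows formally: $UR\cap VS=1$ since $U\cap V=R\cap S=1$ in the $G'K$-decomposition, while $|UR|\,|VS|=|G'|\,|K|=|G|$; as $[G,A]$ is normal in $G$ this yields $G=[G,A]\rtimes C_G(A)$.

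The main obstacle is part $(c)$, that $[G,A]=V\rtimes S$ is cyclic, i.e.\ that $S$ centralizes $V$; here the $A$-action must be used essentially, not merely the Z-group structure. By part $(b)$, $C_{[G,A]}(A)=C_G(A)\cap[G,A]=1$, so in particular $A$ acts fixed-point-freely on the cyclic group $S$. Writing the conjugation action as $v^s=v^t$ for generators $v$ of $V$ and $s$ of $S$, and the $A$-action as $v^x=v^{\alpha(x)}$, $s^x=s^{\beta(x)}$, I would apply an element $x\in A$ to the relation $s^{-1}vs=v^t$; since $x$ is an automorphism this forces $t^{\beta(x)}\equiv t$ modulo $|V|$, hence $\beta(x)\equiv 1$ modulo $d:=\operatorname{ord}(t)$ for every $x\in A$. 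This says exactly that $A$ centralizes the subgroup of $S$ of order $d$; as $C_S(A)=1$ we conclude $d=1$, so $S$ centralizes $V$ and $[G,A]=V\times S$ is a product of cyclic groups of coprime order, hence cyclic. I expect this compatibility computation to be the delicate point, and it is the cleanest route, avoiding an appeal to heavier fixed-point-free nilpotency theorems.
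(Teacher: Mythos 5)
Your proof is correct, but it takes a genuinely different route from the paper's. You front-load the Burnside--H\"older--Zassenhaus classification of groups with all Sylow subgroups cyclic, writing $G=G'\rtimes K$ with $G'$ and $K$ cyclic of coprime order, and then prove all three parts by explicit bookkeeping in that decomposition, culminating in the congruence computation $t^{\beta(x)}\equiv t$ that forces $S$ to centralize $V$; each step of that computation checks out, including the identification $C_G(A)=UR$, $[G,A]=VS$ via the identity $[pk,x]=\bigl(k^{-1}[p,x]k\bigr)[k,x]$, and the conclusion $d=1$ from $C_S(A)=1$. The paper never invokes the metacyclic structure theorem: part (a) is the same dichotomy as yours but obtained more slickly from $P=[P,A]\times C_P(A)$ and the direct indecomposability of cyclic $p$-groups; part (b) is a transfer argument (Burnside's normal $p$-complement theorem at the smallest prime dividing $|C_G(A)\cap[G,A]|$, combined with the fact that $C_G(A)$ controls $G$-fusion in itself under coprime action and the identity $[G,A,A]=[G,A]$); and part (c) shows $[G,A]$ is nilpotent by using the fixed-point-free action of $A$ and the abelianness of $\operatorname{Aut}(P)$ to get $P\leq Z(N_{[G,A]}(P))$, whence nilpotent-with-cyclic-Sylows gives cyclic. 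What your approach buys is a completely explicit, computational proof with no fusion arguments in sight and a concrete description of all the subgroups involved; what it costs is the appeal to the Z-group classification, whose standard proof itself rests on Burnside's transfer theorem, so the transfer-theoretic content is relocated into the cited structure theorem rather than eliminated. The paper's proof is self-contained modulo the standard coprime-action and normal-$p$-complement toolkit it lists in the preliminaries.
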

    
After analyzing the structure of groups having a nonnormal T.I. subgroup which is also a Hall subgroup, we present the following theorem as a full answer to Question B  by finding a necessary and sufficient condition for Hall T.I. subgroups to have a normal complement. 
This result appears to be a nice application of Theorem \ref{Gen. of Gow} and Proposition \ref{proposition: key pro}. 

\begin{theorem}
    \label{Gen.Frobenius}
    
    Let $H$ be T.I. subgroup of $G$ which is also a Hall subgroup of $N_G(H)$. Then $H$ has a normal complement in $N_G(H)$ if and only if $H$ has a normal complement in $G$. Moreover, if $H$ is nonnormal in $G$ and $H$ has a normal complement in $N_G(H)$ then $H$ is a Frobenius complement.
\end{theorem}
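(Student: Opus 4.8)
The plan is to treat the two directions of the equivalence separately and then to read off the ``moreover'' clause from Theorem \ref{Gen. of Gow}. First the easy implication: suppose $K\trianglelefteq G$ is a normal complement of $H$, so $G=KH$ and $K\cap H=1$. Writing $N=N_G(H)$ and using $H\le N$, Dedekind's modular law gives $N=N\cap KH=(N\cap K)H$, while $N\cap K\trianglelefteq N$ and $(N\cap K)\cap H\le K\cap H=1$; hence $N\cap K$ is a normal complement of $H$ in $N$. By Lemma \ref{lemma: norm} I may assume throughout that $H$ is a Hall $\pi$-subgroup of $G$ itself, so that Proposition \ref{proposition: key pro} is available. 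For the converse, suppose $M$ is a normal complement of $H$ in $N$. The decisive structural point is that $H\trianglelefteq N$ (as $N=N_G(H)$) and $M\trianglelefteq N$ with $M\cap H=1$ and $MH=N$, so $[M,H]\le M\cap H=1$; thus $N=M\times H$ and $M\le C_G(H)$, and in particular the $\pi'$-elements of $N$ are exactly $M$.

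The normal complement in $G$ is then produced by counting $\pi'$-elements, using two consequences of the T.I.\ hypothesis. By Proposition \ref{proposition: key pro} every nontrivial $\pi$-element of $G$ lies in a conjugate of $H$, and by the T.I.\ property in a \emph{unique} such conjugate; and for every $1\ne h\in H$ one has $C_G(h)\le N$, since a centraliser of $h$ fixes $1\ne h\in H\cap H^{c}$ and forces $H^{c}=H$. Combined with $N=M\times H$, for each $1\ne h\in H$ the $\pi'$-elements of $C_G(h)$ are those of $C_N(h)=M\times C_H(h)$, namely $M$, so there are exactly $|M|$ of them. Writing each $g\in G$ as $g=g_\pi g_{\pi'}$ with $g_{\pi'}\in C_G(g_\pi)$ and sorting by the (unique) conjugate containing $g_\pi$, the elements with $g_\pi\ne 1$ number
\[
[G:N]\sum_{1\ne h\in H}\bigl|\{\pi'\text{-elements of }C_G(h)\}\bigr|=[G:N]\,(|H|-1)\,|M|,
\]
so the number of $\pi'$-elements of $G$ equals $|G|-[G:N](|H|-1)|M|=[G:N]|M|=|G|_{\pi'}$.

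It remains to pass from this extremal count to a normal $\pi$-complement, which (being the set of all $\pi'$-elements) is then a complement of $H$. I would argue by induction on $|G|$. Setting $L=O^{\pi}(G)$, every $\pi'$-element lies in $L$; and since for a normal $\pi'$-subgroup $V$ a coset $gV$ consists entirely of $\pi'$-elements exactly when $gV$ is a $\pi'$-element of $G/V$, the count is multiplicative along $\pi'$-quotients. This lets me replace $G$ by $L$ (when $L<G$) or by $G/V$ and invoke induction, the T.I.\ hypothesis ruling out a nontrivial normal $\pi$-subgroup, which would lie in every conjugate of $H$ and force $H\trianglelefteq G$. The reduction lands me in the case $O^{\pi}(G)=G$ with $O_{\pi'}(G)=1$, where I must show the extremal count is impossible unless $G$ is a $\pi'$-group. \textbf{This is the main obstacle}: it is exactly the point where the T.I.\ structure, through Proposition \ref{proposition: key pro} and Theorem \ref{Gen. of Gow}, must be used to exclude an almost-simple configuration; the classical self-normalising case $M=1$ is precisely Frobenius's theorem and serves as the prototype.

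Finally, the ``moreover'' clause is immediate once the equivalence is established: if $H$ is nonnormal and has a normal complement in $N_G(H)$, then it has a normal complement $K$ in $G$, so $G=K\rtimes H$ with $K$ a $\pi'$-group and $G$ is $\pi$-separable. Theorem \ref{Gen. of Gow} now applies, and its part $(b)$ furnishes an $H$-invariant section of $G$ on which $H$ acts without nontrivial fixed points; by the definition recalled in the introduction, $H$ is a Frobenius complement.
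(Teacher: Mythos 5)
There is a genuine gap, and you have flagged it yourself: your argument establishes that $G$ contains exactly $|G|_{\pi'}$ elements of $\pi'$-order, but never shows that this set is a subgroup, and the inductive reduction you sketch does not resolve the core case $O^{\pi}(G)=G$, $O_{\pi'}(G)=1$. This is not a removable technicality. Even in the classical case $M=1$ (Frobenius's theorem) no elementary argument is known for precisely this step; some input from character theory or transfer is required, and your proposal supplies none. An argument that ends at ``the extremal count must be impossible unless $G$ is a $\pi'$-group'' with the decisive case explicitly labelled as the main obstacle is an incomplete proof of the forward implication, and hence also of the ``moreover'' clause, which you derive from it.

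What is missing is exactly the tool the paper uses: the Brauer--Suzuki normal complement theorem stated in the preliminaries (Theorem 8.22 of Isaacs's character theory book). Its hypotheses are that $H$ is a Hall subgroup of $G$, that $H$ controls $G$-fusion in $H$, and that every elementary subgroup of order dividing $|H|$ is conjugate into $H$. You have already derived everything needed to verify these: Lemma \ref{lemma: norm} makes $H$ a Hall $\pi$-subgroup of $G$; your observation that $N_G(H)=M\times H$ with $M\le C_G(H)$ shows that if $x,x^g\in H$ with $x\ne 1$ then $g\in N_G(H)$, $g=sh$ with $s\in M$, $h\in H$, and $x^g=x^h$, so $H$ controls $G$-fusion in $H$; and Proposition \ref{proposition: key pro} places every elementary $\pi$-subgroup inside a conjugate of $H$. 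Invoking Brauer--Suzuki at this point yields the normal $\pi$-complement directly and replaces your entire counting-plus-induction scheme. Your treatment of the easy direction and of the ``moreover'' clause (via $\pi$-separability and Theorem \ref{Gen. of Gow}(b)) agrees with the paper and is fine.
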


The theorem above also generalizes the classical result of Frobenius which asserts that for a  Frobenius group $G$ with complement $H$, the set $N=(G- \bigcup_{g\in G} H^g)\cup \{1\}$ is a normal subgroup of $G$ with $G=NH$.\\

Notation and terminology are standard as in [2].

    \section{preliminaries}
The following well known results about coprime action will be frequently used throughout the paper without further reference.

\begin{theorem}[Coprime Action (\cite{2}, Chapters 3,4)]
    Let $A$ be a group acting by automorphisms on a group $G$. Assume that $(|A|,|G|)=1$ and $A$ or $G$ is solvable. Then the following hold:\\
    $a)$ The equality $G=[G,A]C_G(A)$ holds. Moreover, If $G$ is abelian, $G=[G,A]\times C_G(A)$.\\
    $b)$ The identity $[G,A,A]=[G,A]$ holds regardless of solvability of $A$ or $G$.\\
    $c)$ For each prime $p$ dividing the order of $G$, there exists an $A$-invariant Sylow $p$-subgroup of $G$.\\
    $d)$ For any $A$-invariant $P\in Syl_p(G)$, $P\cap C_G(A)\in Syl_p(C_G(A))$. \\
    $e)$ For any $A$-invariant normal subgroup $N$ of $G$, $C_G(A)N/N= C_{G/N}(A)$.\\
    $f)$ Whenever any two elements of $C_G(A)$ are conjugate in $G$, they are also conjugate in $C_G(A).$
\end{theorem}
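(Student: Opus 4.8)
The plan is to treat the whole statement as a package of consequences of two classical tools, tied together by passing to the semidirect product $\Gamma=G\rtimes A$, in which $G\trianglelefteq\Gamma$, the copy of $A$ is a complement, $\gcd(|G|,|A|)=1$, and one of $G,A$ is solvable. The two tools are the Schur--Zassenhaus theorem (complements to $G$ in $\Gamma$ exist and, because one factor is solvable, form a single $G$-conjugacy class) and Glauberman's fixed-point lemma. I would isolate the latter as a master lemma: if $A$ acts coprimely on $G$, with $A$ solvable after the usual reduction, and $\Omega$ is a transitive $G$-set carrying a compatible $A$-action, meaning $(\omega g)^a=\omega^a g^a$, then $A$ fixes a point of $\Omega$ and the set of $A$-fixed points is a single $C_G(A)$-orbit. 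This is proved by choosing $\omega\in\Omega$, identifying $\Omega$ with the coset space of $\mathrm{Stab}_G(\omega)$, and invoking the existence and conjugacy halves of Schur--Zassenhaus inside the relevant stabilizers.

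With the master lemma in hand, parts $(c)$, $(e)$, $(f)$ fall out by choosing the right transitive $G$-set. For $(c)$ I take $\Omega=\mathrm{Syl}_p(G)$, on which $G$ acts transitively by conjugation and $A$ acts compatibly, so a fixed point is an $A$-invariant Sylow $p$-subgroup. For $(e)$ and $(f)$ the relevant sets are coset spaces: given an $A$-invariant subgroup $C\le G$ and an $A$-invariant right coset $Cx$, the lemma applied to the $G$-set of right cosets of $C$ shows that $Cx$ meets $C_G(A)$. Taking $C=N$ shows that every element of $C_{G/N}(A)$ lifts to $C_G(A)$, which with the trivial inclusion gives $C_G(A)N/N=C_{G/N}(A)$, proving $(e)$. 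Taking $C=C_G(u)$ and $Cx=\{g:u^g=v\}$, which is $A$-invariant precisely because $u,v\in C_G(A)$, produces a conjugating element inside $C_G(A)$, proving $(f)$.

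Part $(d)$ combines $(c)$ with two further inputs: the master lemma again yields that any two $A$-invariant Sylow $p$-subgroups are conjugate by an element of $C_G(A)$, and the classical fact that for a $p$-subgroup $R$ the group $N_G(R)$ acts transitively on the Sylow $p$-subgroups of $G$ containing $R$. Starting from $R\in\mathrm{Syl}_p(C_G(A))$, which is an $A$-invariant $p$-subgroup, I apply the master lemma inside $N_G(R)$ to find an $A$-invariant $P'\in\mathrm{Syl}_p(G)$ with $R\le P'$; then $R\le P'\cap C_G(A)$ forces $R=P'\cap C_G(A)$, and transporting by the $C_G(A)$-conjugacy that carries $P'$ to the given $P$ yields $P\cap C_G(A)\in\mathrm{Syl}_p(C_G(A))$.

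Finally $(a)$ and $(b)$ are commutator bookkeeping built on $(e)$. For $(a)$, in $\bar G=G/[G,A]$ the group $A$ acts trivially, so $\bar G=C_{\bar G}(A)$, and $(e)$ rewrites this as $G=[G,A]\,C_G(A)$; when $G$ is abelian I would produce the complementary splitting via the norm map $\nu\colon g\mapsto\prod_{a\in A}g^a$, which lands in $C_G(A)$, restricts there to the bijective $|A|$-power map by coprimality, and kills $[G,A]$, whence $[G,A]\cap C_G(A)=1$ and the product is direct. For $(b)$ I apply $(a)$ to the $A$-invariant subgroup $M=[G,A]$, writing $M=[M,A]\,C_M(A)$ with both factors $A$-invariant, and then use $[XY,A]=[X,A]^Y[Y,A]$ with $X=[M,A]$, $Y=C_M(A)$ together with $[C_M(A),A]=1$ to collapse $[M,A]=[M,A,A]$, i.e. $[G,A]=[G,A,A]$; a direct commutator-collecting argument shows this equality needs no solvability, matching the stated hypothesis. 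The main obstacle is the master lemma itself: everything reduces to the existence and, above all, the conjugacy of complements from Schur--Zassenhaus, and it is exactly there that the hypothesis ``$A$ or $G$ solvable'' is consumed, both to reduce Glauberman's lemma to the solvable-$A$ case and to guarantee conjugacy; once that lemma is secured, the rest is the routine translation of each clause into the right transitive $G$-set or commutator identity.
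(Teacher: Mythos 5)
Your overall architecture --- pass to $\Gamma=G\rtimes A$, prove Glauberman's fixed-point lemma from the two halves of Schur--Zassenhaus (which is exactly where ``$A$ or $G$ solvable'' is spent), then harvest $(c)$, $(e)$, $(f)$ from well-chosen transitive $G$-sets and $(a)$, $(b)$ from commutator identities --- is precisely the standard development in the source the paper cites for this theorem (Isaacs, [2], Chapters 3--4; the paper itself gives no proof), and those parts of your proposal are sound. However, there is a genuine gap in your part $(d)$: the ``classical fact'' that $N_G(R)$ acts transitively on the set of Sylow $p$-subgroups of $G$ containing a given $p$-subgroup $R$ is false. Take $G=S_4$, $p=2$, $R=\langle (12)(34)\rangle$. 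Every Sylow $2$-subgroup of $S_4$ contains the Klein four-group, hence contains $R$, so the set in question consists of all three Sylow $2$-subgroups; but $N_G(R)=C_G((12)(34))$ has order $8$ and is itself one of these three Sylow subgroups, say $D$, and conjugation by $N_G(R)=D$ fixes $D$ while interchanging the other two. The action has two orbits, so your master lemma --- which you stated, correctly, only for transitive $G$-sets --- simply does not apply to this set, and the step producing an $A$-invariant $P'\in Syl_p(G)$ with $R\le P'$ collapses. Note that one cannot cheaply restrict to a single $N_G(R)$-orbit either, since nothing guarantees that $A$ stabilizes one of the orbits.

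The standard repair, which salvages the rest of your $(d)$ verbatim, is an induction on $|G:Q|$ showing that every $A$-invariant $p$-subgroup $Q$ lies in an $A$-invariant Sylow $p$-subgroup of $G$: if $Q\notin Syl_p(G)$ then $Q<N_P(Q)$ for any $P\in Syl_p(G)$ containing $Q$, so $Q$ is not Sylow in the $A$-invariant group $N=N_G(Q)$; part $(c)$ applied to $N$ gives an $A$-invariant $S\in Syl_p(N)$, and $Q\le S$ with $Q<S$ since $Q\trianglelefteq N$, so induction applied to $S$ finishes. With that lemma in place of the false transitivity claim, your concluding steps --- $R=P'\cap C_G(A)$ because $R\in Syl_p(C_G(A))$ and $P'\cap C_G(A)$ is a $p$-subgroup of $C_G(A)$, then transport to the given $P$ along the $C_G(A)$-conjugacy of $A$-invariant Sylow subgroups --- are correct. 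One further caveat: your closing remark that a ``direct commutator-collecting argument'' proves $(b)$ without any solvability hypothesis is not substantiated; elementary collection gets you $[G,A]=[G,A,A]\,[G,A]'$-type identities and settles the solvable case, but the unconditional statement in the literature ultimately leans on the odd-order theorem (coprimality forces one of $|A|$, $|G|$ to be odd). Since the theorem as stated here assumes $A$ or $G$ solvable anyway, this affects only your side remark, not the main line of the proof.
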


Next, we state a theorem to give transfer theoretical facts pertaining to the proof of Proposition 1.6. They will also be used without further reference.
\begin{definition}
    Let $G$ be a group. Assume that $H\leq K\leq G$. We say that $K$ controls $G$-fusion in $H$ if whenever two elements of $H$ are conjugate in $G$, they are also conjugate in $K$. 
\end{definition}

\begin{theorem}[\cite{2}, Chapter 5]
    Let $G$ be a group and $P$ be a Sylow $p$-subgroup of $G$ for a prime $p$ dividing the order of $G$. Then the following hold:\\
    $a)$ If $P\leq Z(N_G(P))$ then $G$ has a normal $p$-complement. \\
    $b)$ If $P$ is cyclic and $p$ is the smallest prime diving the order of $G$ then $G$ has a normal $p$-complement. \\
    $c)$ $G$ has a normal $p$-complement if and only if $P$ controls the $G$-fusion in $P$. 
    
\end{theorem}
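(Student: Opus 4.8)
\section*{Proof proposal for Theorem (normal $p$-complement criteria)}

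The plan is to prove the three parts in the stated order, since each builds on transfer theory and the earlier parts can feed into the later ones. Throughout, fix $P \in Syl_p(G)$ and recall that by part $(c)$ it suffices, for a normal $p$-complement to exist, to show that $P$ controls its own $G$-fusion. I would first establish $(c)$ as the conceptual backbone, then derive $(a)$ and $(b)$ largely as consequences of it together with standard facts. Although the excerpt lists $(a),(b),(c)$ in that order, logically $(c)$ is the hardest and most central, so I would prove it first and then use it.

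For part $(c)$, the forward direction is immediate: if $G$ has a normal $p$-complement $N$, then $G = N \rtimes P$, and any two elements of $P$ that are $G$-conjugate are already $P$-conjugate because conjugation modulo $N$ identifies $G$-conjugacy of $p$-elements with $P$-conjugacy (one writes the conjugating element as $nq$ with $n \in N$, $q \in P$ and pushes the $N$-part into the quotient $G/N \cong P$). The substantive direction is the converse, and here the plan is to invoke Burnside's fusion-to-transfer machinery: the transfer homomorphism $V \colon G \to P/P'$ (or to $P/\Phi(P)$ after abelianizing appropriately) has the property that its kernel is a normal subgroup whose index is exactly the order of the image of $P$ under $V$, and the Burnside formula computes $V$ on $P$ purely in terms of the $G$-fusion pattern of $P$. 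When $P$ controls its own fusion, the transfer restricted to $P$ is essentially the natural surjection onto $P/P'$, so the transfer is surjective and its kernel $N$ satisfies $G = N P$ with $N \cap P = 1$, giving the normal $p$-complement. The main obstacle is the bookkeeping in the Burnside transfer evaluation, which I would handle by citing the standard transfer computation from \cite{2}, Chapter 5, rather than reproving it.

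Part $(a)$ then follows quickly. If $P \leq Z(N_G(P))$, then $P$ is abelian and central in its normalizer, and Burnside's classical normal $p$-complement theorem applies directly; alternatively, one deduces it from $(c)$ by showing that the centrality hypothesis forces $P$ to control its own fusion. The cleanest route is the latter: by Burnside's lemma on fusion in abelian Sylow subgroups, whenever $P$ is abelian, $N_G(P)$ controls $G$-fusion in $P$; combined with $P \leq Z(N_G(P))$, which makes $N_G(P)$-conjugation on $P$ trivial, we get that $P$ controls its own fusion, and $(c)$ finishes the job.

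Part $(b)$ is an induction on $|G|$ using $(c)$ as the base mechanism. Assume $P$ is cyclic and $p$ is the smallest prime dividing $|G|$. The key point is that $\operatorname{Aut}(P)$ is cyclic of order $p^{a-1}(p-1)$ when $|P| = p^a$, so $N_G(P)/C_G(P)$ embeds in a group whose order is divisible only by $p$ and by primes dividing $p-1$; since $p$ is the smallest prime divisor of $|G|$, no prime dividing $p-1$ can divide $|G|$, forcing $N_G(P)/C_G(P)$ to be a $p$-group, hence trivial because it has order prime to $p$ as well. Thus $N_G(P) = C_G(P)$, which for the cyclic (abelian) group $P$ means $P \leq Z(N_G(P))$, and $(a)$ applies to produce the normal $p$-complement. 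The main obstacle here is simply assembling the order-arithmetic on $\operatorname{Aut}(P)$ correctly; once $N_G(P) = C_G(P)$ is secured, the result is immediate from $(a)$.
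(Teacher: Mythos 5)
This statement is quoted background: the paper gives no proof at all, citing \cite{2}, Chapter 5, so your proposal can only be measured against the standard transfer-theoretic development there --- and against that standard, your plan for the hard direction of $(c)$ has a genuine gap. You claim that when $P$ controls its own fusion, the transfer $V\colon G\to P/P'$ restricted to $P$ is essentially the natural surjection, ``so the transfer is surjective and its kernel $N$ satisfies $G=NP$ with $N\cap P=1$.'' The surjectivity is right: the transfer evaluation lemma plus fusion control gives $V(x)\equiv x^{|G:P|} \pmod{P'}$, and since $|G:P|$ is prime to $p$ this is surjective on $P/P'$. But the conclusion $N\cap P=1$ is false whenever $P$ is nonabelian: one gets $P\cap \ker V=P'$ (this is exactly what the focal subgroup theorem predicts, since fusion control forces $\operatorname{Foc}_G(P)=P'$), so $\ker V$ is a normal subgroup of index $|P:P'|$, not a $p'$-group. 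The obvious repair --- induct by passing to $K=\ker V$ with Sylow $p$-subgroup $P'$ --- also fails, because the hypothesis does not descend: two elements of $P'$ that are $K$-conjugate are only known to be $P$-conjugate, not $P'$-conjugate. This is precisely why the standard proof of the fusion criterion (as in Isaacs) does not run a one-shot transfer argument but routes through Frobenius' normal $p$-complement theorem, whose subgroup-level hypothesis (every $N_G(U)/C_G(U)$ a $p$-group, equivalently every $p$-local subgroup $p$-nilpotent) is stable under the induction. Without Frobenius' theorem or an equivalently strengthened inductive statement, your proof of $(c)$ only produces $O^p(G)<G$, i.e.\ a nontrivial abelian $p$-quotient, not a normal $p$-complement.

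Your treatment of $(a)$ and $(b)$ is essentially sound but has a structural wrinkle. For $(a)$, the remark that ``Burnside's classical normal $p$-complement theorem applies directly'' is circular, since part $(a)$ \emph{is} Burnside's theorem; and the alternative route through $(c)$ inherits the gap above. The correct direct argument is the one your own transfer computation already contains: for $P\le Z(N_G(P))$ the group $P$ is abelian, Burnside's fusion lemma shows $N_G(P)$ (which centralizes $P$) controls fusion, so $V(x)=x^{|G:P|}$ on $P$, and here $P/P'=P$, so $P\cap\ker V=1$ genuinely holds and $\ker V$ is the complement --- i.e.\ the abelian case is exactly where your $(c)$-argument is valid, and $(a)$ should be proved that way rather than quoted. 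Part $(b)$ is correct and matches the standard deduction: $N_G(P)/C_G(P)$ is a $p'$-group embedding in $\operatorname{Aut}(P)$ of order $p^{a-1}(p-1)$, minimality of $p$ kills every prime divisor, so $N_G(P)=C_G(P)$, giving $P\le Z(N_G(P))$ and reducing to $(a)$. So the dependency order should be $(a)$, then $(b)$, with $(c)$ last and proved by substantially heavier means than your sketch allots.
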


We close this section with a character theoretical result, due to Brauer and Suzuki, to which we appeal in the proof of Theorem \ref{Gen.Frobenius}.

\begin{theorem}[\cite{3}, Theorem 8.22]
    Let $H$ be a Hall subgroup of $G$ and suppose that whenever two elements of $H$ are conjugate in $G$, they are already conjugate in $H$. Assume that for every elementary subgroup $E$ of $G$, if $|E|$ divides ${|H|}$, then $E$ is conjugate to a subgroup of $H$. Then $H$ has a normal complement in $G$.
\end{theorem}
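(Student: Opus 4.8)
The plan is to exhibit the normal complement explicitly as the set $N$ of all $\pi'$-elements of $G$, where $\pi$ is the set of primes dividing $|H|$. Since $H$ is a Hall $\pi$-subgroup, once $N$ is known to be a subgroup it is automatically a normal $\pi'$-subgroup with $N\cap H=1$, and then $|N|\le |G:H|$ while Frobenius's theorem (the number of solutions of $x^{|G|_{\pi'}}=1$ is divisible by $|G|_{\pi'}=|G:H|$) forces $|N|=|G:H|$ and $NH=G$; so everything reduces to proving that $N$ is a subgroup. A preliminary observation I would record first is that every $\pi$-element $x$ of $G$ is $G$-conjugate into $H$: the cyclic group $\langle x\rangle$ is elementary and its order is a $\pi$-number dividing $|G|_\pi=|H|$, so the hypothesis on elementary subgroups applies. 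Together with the fusion hypothesis this means the $G$-class of a $\pi$-element determines a well-defined $H$-class, a bookkeeping fact used throughout.

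The main tool is a ``section'' of restriction built from $\pi$-parts. For a class function $\theta$ on $H$, define $\hat\theta$ on $G$ by $\hat\theta(g)=\theta(h)$, where $h\in H$ is any element $G$-conjugate to the $\pi$-part $g_\pi$ of $g$. This is well defined by the preliminary observation together with fusion control, $\hat\theta$ is a class function, and $\theta\mapsto\hat\theta$ is a unital ring homomorphism with $(\hat\theta)_H=\theta$ (for $h\in H$ one has $h_\pi=h$). The heart of the argument is the claim that $\hat\theta$ is a generalized character of $G$ whenever $\theta$ is a generalized character of $H$, and I would prove this via Brauer's characterization of characters: it suffices to check that $(\hat\theta)_E$ is a generalized character of every elementary subgroup $E\le G$. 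Writing the nilpotent group as $E=E_\pi\times E_{\pi'}$, the value $\hat\theta(e)$ depends only on the projection $e\mapsto e_\pi\in E_\pi$; moreover $E_\pi$ is an elementary $\pi$-subgroup, so $|E_\pi|$ divides $|G|_\pi=|H|$ and the elementary-conjugacy hypothesis conjugates $E_\pi$ into $H$. Hence $(\hat\theta)_E$ is the inflation to $E$ of the restriction of $\theta$ to a subgroup of $H$, and is therefore a generalized character. I expect this verification---correctly reducing an \emph{arbitrary} elementary subgroup to its $\pi$-Hall part and invoking the conjugacy hypothesis there---to be the main obstacle.

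With the claim in hand I would apply it to each $\varphi\in\mathrm{Irr}(H)$, producing a generalized character $\hat\varphi$ of $G$ with $(\hat\varphi)_H=\varphi$ and $\hat\varphi(1)=\varphi(1)>0$. To finish I would show each $\hat\varphi$ is in fact irreducible, by induction on $|G|$. Expanding $[\hat\varphi,\hat\varphi]_G$ and regrouping the sum over the $G$-classes of $\pi$-elements---which by fusion control are in bijection with the $H$-classes of $H$---reduces the identity $[\hat\varphi,\hat\varphi]_G=1$ to the statement that each proper centralizer $C_G(h)$ (with $1\ne h\in H$) has a normal $\pi$-complement with Hall $\pi$-subgroup $C_H(h)$. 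The three hypotheses descend to such centralizers, so this holds by the inductive hypothesis; the case where $H$ meets $Z(G)$ nontrivially is handled separately by passing to the quotient $G/\langle z\rangle$ for a central $\pi$-element $z$ and applying Burnside's normal $p$-complement criterion. Since $\hat\varphi(1)>0$ and $[\hat\varphi,\hat\varphi]_G=1$, each $\hat\varphi$ is a genuine irreducible character.

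Finally I would set $N=\bigcap_{\varphi\in\mathrm{Irr}(H)}\ker\hat\varphi$, a normal subgroup of $G$. Because $g\in\ker\hat\varphi$ precisely when $\varphi(g_\pi)=\varphi(1)$, letting $\varphi$ range over all of $\mathrm{Irr}(H)$ (so that the values detect the identity of $H$, e.g.\ via the regular character $\hat\rho_H=|H|\,\mathbf 1_N$) shows that $g\in N$ exactly when $g_\pi=1$; that is, $N$ is precisely the set of $\pi'$-elements. By the order count in the first paragraph, $N$ is the desired normal complement. Besides the Brauer-characterization step, the remaining delicate point is checking that the Hall, fusion-control, and elementary-conjugacy hypotheses are inherited by the centralizers $C_G(h)$ (and the quotients $G/\langle z\rangle$) so that the induction closes.
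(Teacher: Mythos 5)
This statement is quoted in the paper as a known result (Isaacs, \emph{Character Theory of Finite Groups}, Theorem 8.22) and is not proved there, so I am comparing your attempt against the standard argument rather than against anything in the paper. Your overall architecture is the right one: the preliminary observation, the construction of $\hat\theta$ from the $\pi$-part, the verification via Brauer's characterization (including the correct reduction of an arbitrary elementary $E=E_\pi\times E_{\pi'}$ to its Hall $\pi$-part, which is again elementary of order dividing $|H|$), and the identification of $N=\bigcap\ker\hat\varphi$ with the set of $\pi'$-elements are all sound.

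The genuine gap is in the irreducibility step. Your induction requires the three hypotheses to be inherited by $C_G(h)$ with Hall subgroup $C_H(h)$, and none of the three inheritances is justified: it is not clear that $C_H(h)$ is even a Hall $\pi$-subgroup of $C_G(h)$, it is not clear that $C_H(h)$ controls $C_G(h)$-fusion in itself (control of fusion of elements does not in general pass to centralizers --- this is precisely the gap between control of fusion and control of \emph{strong} fusion), and it is not clear that elementary subgroups of $C_G(h)$ are conjugate into $C_H(h)$ \emph{by elements of $C_G(h)$}. You flag this as ``delicate'' but offer no argument, and the $h\in Z(G)$ case via Burnside is also unsubstantiated. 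The standard proof avoids the induction entirely. First one shows directly that $C_H(h)$ is a Hall $\pi$-subgroup of $C_G(h)$: for $p\in\pi$ and $P\in Syl_p(C_G(h))$, the group $P\langle h\rangle=(P\langle h_p\rangle)\times\langle h_{p'}\rangle$ is $p$-elementary of order dividing $|H|$, hence conjugate into $H$, and after adjusting by fusion control one may take the conjugating element to fix $h$, landing $P$ inside $C_H(h)$. Then, writing $c_h$ for the number of $\pi'$-elements of $C_G(h)$, Frobenius's solution-counting theorem gives $c_h\geq |C_G(h)|_{\pi'}=|C_G(h)|/|C_H(h)|$ for every $h$, while partitioning $G$ by the conjugacy class of the $\pi$-part gives $\sum_h c_h/|C_G(h)|=1=\sum_h 1/|C_H(h)|$; termwise comparison forces $c_h=|C_G(h)|/|C_H(h)|$ for all $h$, and then $[\hat\varphi,\hat\varphi]_G=\sum_h c_h|\varphi(h)|^2/|C_G(h)|=[\varphi,\varphi]_H=1$ with no inheritance of hypotheses needed. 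Without this (or an honest proof that your inductive setup closes), the proof is incomplete at its central computation.
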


 \section{Proofs of the Key Propositions}
 
 \begin{proof}[Proof of Proposition \ref{proposition: key pro}]
     Let $K$ be a $\pi$-subgroup of $G$ which is not contained in any conjugate of $H$. Let $P \in Syl_p(K)$ for a prime $p$ dividing the order of $K$. It should be noted that Sylow $p$-subgroups of $H$ are also Sylow $p$-subgroups of $G$, and hence there exists $x\in G$ such that $P\leq H^x$.
     Set $T= H^{x} \cap K$. It is straightforward to see that $T$ is a T.I. subgroup of $K$. Note that $T$ is nontrivial as $P$ is contained in $T$.
     Pick an element $n$ from $N_{K}(T)$. Then $ T\leq H^{x}\cap H^{xn}$ and so $n$ normalizes $H^x$. This forces that $n\in H^{x}$, because otherwise the group $H^x \langle n \rangle$ contains $H^{x}$ properly.
     It follows that $T$ is a self normalizing T.I. subgroup of $K$, and hence $T$ is a Hall subgroup of $K$. 
     
     Let now $q$ be a prime dividing $\mid$$K$:$T$$\mid$ and pick $Q\in Syl_{q}(K)$. A similar argument as above shows that $Q\leq H^{y} \cap K$ for some $y\in G$. Set $S=H^y\cap K$. Clearly, the group $S$ is also a self normalizing T.I. subgroup of $K$. 
     If $T\cap S^k\neq 1$ for some $k\in K$, then  $H^x=H^{yk}$ as their intersection is nontrivial. This forces the equality $T=S^k$ which is not possible as $q$ is coprime to the order of $T$. Thus we have $T\cap S^k =1$ for all $k\in K$.
     As a consequence we get $S \subseteq(K -\bigcup_{k\in K} T^k)\cup \{1\}$, and hence
     $$\bigcup_{k\in K} S^k \subseteq(K -\bigcup_{k\in K} T^k)\cup \{1\}$$ 
     
     A simple counting argument shows that
     $$ |K|-\dfrac{|K|}{|S|}+1\leq \dfrac{|K|}{|T|},$$
     
     and so,
     $$1\leq \dfrac{1}{|S|}+\dfrac{1}{|T|}.$$
     
     As $T$ and $S$ are both nontrivial, we have $|S|=|T|=2$. That is $q=2$ as $Q \leq S$. Then $q$ also divides the order of $T$, which is a contradiction. Thus, $K$ is contained in a conjugate of $H$, in particular, if $K$ is a Hall $\pi$-subgroup of $G$ then $K\leq H^g$ for some $g\in G$. Consequently, we have $K=H^g$ completing the proof.
 \end{proof}

\begin{proof}[Proof of Proposition \ref{proposition: Fusion proposition}]
    $a)$ Let $p$ a prime dividing the order of $C_G(A)$ and $P$ be an $A$-invariant Sylow $p$-subgroup of $G$. Then $C_P(A)\in Syl_p(C_P(A))$ and $P=[P,A]\times C_P(A)$ due to the coprimeness. As $C_P(A)$ is nontrivial and $P$ is cyclic, we get $C_P(A)=P$. Hence $p$ is coprime to the index of $C_G(A)$ in $G$.\\
    $b)$ It suffices to show that $C=C_G(A)\cap [G,A]=1.$ Assume the contrary and let $p$ the smallest prime dividing the order of $C$. Let $P\in Syl_p(C)$. By $a)$, $P$ is also a Sylow subgroup of $[G,A]$. Burnside's  Theorem implies that $P$ has a normal $p$-complement in $C$, equivalently $P$ controls $C$-fusion in $P$. Note that $C$ controls $[G,A]$-fusion in $C$, and hence, $P$ controls $[G,A]$-fusion in $P$. Thus, $[G,A]$ has a normal $p$-complement $N$, that is, $[G,A]=NP$. This leads to $[G,A,A]\leq N < [G,A]$, which is a contradiction. Thus, $C=C_G(A)\cap [G,A]=1$ as claimed.\\
    $c)$ It is enough to show that $K=[G,A]$ is nilpotent. We can assume that $K$ is nontrivial. Note that $A$ acts on $K$ fixed point freely by $b)$.
    Let $P$ be an $A$-invariant Sylow $p$-subgroup of $K$ for an arbitrary prime $p$ dividing the order of $K$. Clearly, $N_K(P)$ is also an $A$-invariant subgroup of $K$ so that $N_K(P)A$ acts on $P$ by automorphisms. Since $Aut(P)$ is abelian, $[N_K(P),A]$ acts trivially on $P$. As $A$ acts on $N_K(P)$ fixed point freely we get $[N_K(P),A]=N_K(P)$. Then $P\leq Z(N_K(P))$ and so $K$ is $p$-nilpotent. Since $p$ is arbitrary, $K$ is nilpotent as claimed.
\end{proof}

\section{Some Technical lemmas}

In this section, we present four lemmas which will be frequently used in proving the main results. The first one is a generalization of the fact that a self normalizing T.I. subgroup is also a Hall subgroup.

\begin{lemma}
    \label{lemma: norm}
    
    Let $H$ be a T.I. subgroup of a group $G$. Then  $H$ is a Hall subgroup of $G$ if and only if $H$ is a Hall subgroup of $N_G(H)$.
    
\end{lemma}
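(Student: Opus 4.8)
The plan is to prove the two implications separately, with essentially all the content lying in the reverse direction. Throughout, write $\pi$ for the set of primes dividing $|H|$, so that ``$H$ is a Hall subgroup'' of any group between $H$ and $G$ means exactly ``$H$ is a Hall $\pi$-subgroup'' of it. The forward implication requires no T.I.\ hypothesis and is immediate: if $H$ is a Hall $\pi$-subgroup of $G$, then since $H\leq N_G(H)\leq G$ the index $|N_G(H):H|$ divides $|G:H|$, which is coprime to $|H|$; hence $H$ is a Hall $\pi$-subgroup of $N_G(H)$ as well.

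For the reverse implication, assume $H$ is a Hall $\pi$-subgroup of $N_G(H)$; I must show that the $\pi$-part of $|G|$ equals $|H|$, and it suffices to prove that for each prime $p\in\pi$ a Sylow $p$-subgroup $P$ of $H$ is already a Sylow $p$-subgroup of $G$. First I would record two preliminary facts. Since $H$ is a Hall $\pi$-subgroup of $N_G(H)$ and $p\in\pi$, the $p$-part $|N_G(H)|_p$ equals $|H|_p=|P|$, so $P$ is in fact a Sylow $p$-subgroup of $N_G(H)$, not merely of $H$. Secondly, the T.I.\ hypothesis shows that $P$ lies in a unique conjugate of $H$, namely $H$ itself: if $P\leq H^g$ then $H\cap H^g\supseteq P\neq 1$, forcing $H=H^g$.

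Now argue by contradiction. Suppose $P$ is not Sylow in $G$ and embed it in a Sylow $p$-subgroup $P^{*}$ of $G$ with $P<P^{*}$. Using the fact that a proper subgroup of a finite $p$-group is properly contained in its normalizer, I would produce an element $y\in N_{P^{*}}(P)\setminus P$. Then $y$ normalizes $P$, so $P=P^{y}\leq H^{y}$; together with $P\leq H$ this gives $P\leq H\cap H^{y}$, and the T.I.\ property forces $y\in N_G(H)$. But $y$ is a $p$-element normalizing $P$ with $y\notin P$, whence $\langle P,y\rangle$ is a $p$-subgroup of $N_G(H)$ strictly containing $P$ --- contradicting the fact that $P$ is already Sylow in $N_G(H)$. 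Hence $P$ is Sylow in $G$, and letting $p$ range over $\pi$ yields that the $\pi$-part of $|G|$ equals $|H|$, i.e.\ $H$ is a Hall subgroup of $G$.

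The only genuinely delicate point is the step ``$y$ normalizes $P$ $\Rightarrow$ $y\in N_G(H)$''. This is precisely where the T.I.\ hypothesis is used: it converts normalization of the small $p$-group $P$ into normalization of the whole subgroup $H$, after which the assumed Hall property inside $N_G(H)$ immediately closes the argument. Everything else is routine Sylow theory.
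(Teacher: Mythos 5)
Your proposal is correct and follows essentially the same route as the paper: take $P\in Syl_p(H)$ not Sylow in $G$, find a $p$-element $y$ normalizing $P$ outside $P$, use the T.I.\ property to force $y\in N_G(H)$, and contradict the Hall hypothesis in $N_G(H)$. Your write-up is if anything slightly tidier, since you explicitly note that $P$ is Sylow in $N_G(H)$ before deriving the contradiction from $\langle P,y\rangle$.
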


\begin{proof}
    One direction is trivial to show. Let $H$ be a Hall subgroup of $N_G(H)$, and let $p$ be a prime dividing both  $|H|$ and $|G:H|$. Pick $P\in Syl_p(H)$, $Q\in Syl_p(G)$ such that $P< Q$, and $x\in N_Q(P)-P$. As $H\cap H^x$ is nontrivial, $x \in N_G(H)$. This forces that $x\in H$ since $H$ is a Hall subgroup of $N_G(H)$. Then $P \langle x \rangle$ is a $p$-subgroup of $H$ containing $P$ properly. This contradiction completes the proof.
\end{proof}

\begin{remark}
    Note that the lemmas below can be proven by using the conjugacy part of Schur-Zassenhaus theorem and Feit-Thompson's odd order theorem. Here, we get the same conclusion without appealing to the odd order theorem. Lemma \ref{lemma:p-invariant} is well known yet Lemma \ref{lemma: normalizer} and Lemma \ref{lemma: image of T.I} first time appears as far as author is aware.
    \end{remark}

\begin{lemma}
    \label{lemma:p-invariant}
    
    Let $G$ be a group and $N$ be a normal subgroup of $G$ with a complement $H$. If $(|H|,|N|)=1$ and $H$ is a T.I. subgroup of $G$ then for each prime $p$ dividing the order of $N$, there is an $H$-invariant Sylow $p$-subgroup of $N$.
\end{lemma}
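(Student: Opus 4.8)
The plan is to reduce the problem to the conjugacy of Hall $\pi$-subgroups and to supply that conjugacy from Proposition \ref{proposition: key pro} rather than from the conjugacy part of Schur--Zassenhaus; this is precisely the device that lets us bypass the odd order theorem. Write $\pi$ for the set of primes dividing $|H|$. Since $G=NH$ with $N\cap H=1$ and $(|H|,|N|)=1$, the $\pi$-part of $|G|$ equals $|H|$, so $H$ is a Hall $\pi$-subgroup of $G$; being also a T.I. subgroup, $H$ satisfies the hypotheses of Proposition \ref{proposition: key pro}.

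Next I would fix a prime $p$ dividing $|N|$ and choose $P\in Syl_p(N)$. The Frattini argument gives $G=N\,N_G(P)$, whence $|N_G(P):N_N(P)|=|G:N|=|H|$, where $N_N(P)=N\cap N_G(P)$ is normal in $N_G(P)$. As $|N_N(P)|$ divides $|N|$, it is a normal $\pi'$-subgroup of index $|H|$ in $N_G(P)$, so by the existence part of the Schur--Zassenhaus theorem, which requires no solvability assumption, it admits a complement $H_1$ in $N_G(P)$ with $|H_1|=|H|$. By construction $H_1$ normalizes $P$ and is a $\pi$-subgroup of $G$.

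Finally I would invoke Proposition \ref{proposition: key pro}: every $\pi$-subgroup of $G$ lies in a conjugate of $H$, so $H_1\leq H^g$ for some $g\in G$, and comparing orders forces $H_1=H^g$. Then $H=H_1^{g^{-1}}$ normalizes $P^{g^{-1}}$, and since $N$ is normal in $G$ the subgroup $P^{g^{-1}}$ is again a Sylow $p$-subgroup of $N$; thus it is the desired $H$-invariant Sylow $p$-subgroup. The only genuine obstacle is obtaining the conjugacy $H_1=H^g$ without appealing to Feit--Thompson, and this is exactly the step where Proposition \ref{proposition: key pro} does the work that the conjugacy part of Schur--Zassenhaus would otherwise require.
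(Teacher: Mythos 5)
Your proof is correct and follows essentially the same route as the paper: the Frattini argument applied to $P\in Syl_p(N)$, the existence part of Schur--Zassenhaus to produce a complement of order $|H|$ inside $N_G(P)$, and Proposition \ref{proposition: key pro} to conjugate that complement to $H$, thereby avoiding the conjugacy part of Schur--Zassenhaus and the odd order theorem. The only difference is that you spell out explicitly why $H$ is a Hall $\pi$-subgroup of $G$, which the paper leaves implicit.
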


\begin{proof}
    By the Frattini argument, $N_G(P)N=G$ for any $P\in Syl_p(N)$. We have $K=N\cap N_G(P)$ is normal in $N_G(P)$ and $|N_G(P):K|=|G:N|$. Then $K$ is coprime to its index in $N_G(P)$. By the existence part of Schur-Zassenhaus theorem, $N_G(P)=KU$ where $|U|=|G:N|=|H|$.  It follows by Proposition \ref{proposition: key pro} that $U=H^g\leq N_G(P)$ for some $g\in G$, that is, $H$ normalizes $P^{g^{-1}}$ as required.
\end{proof}

\begin{lemma}
    \label{lemma: normalizer}
    
    Let $N$ be a normal subgroup of $G$ and $H$ be a T.I. subgroup of $G$ with $(|H|,|N|)=1$. Then $N_{\overline G}(\overline H)=\overline {N_G(H)}$ where $\overline G=G/N$.
\end{lemma}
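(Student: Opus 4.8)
The plan is to prove the two inclusions separately. The inclusion $\overline{N_G(H)} \subseteq N_{\overline G}(\overline H)$ is immediate: if $g \in N_G(H)$, then conjugation by $g$ fixes $H$, hence it fixes $\overline H = HN/N$, so $\overline g \in N_{\overline G}(\overline H)$. All the content lies in the reverse inclusion.

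For that, I would begin with an arbitrary $\overline g \in N_{\overline G}(\overline H)$ and unwind what the condition says at the level of $G$. The equality $\overline H{}^{\,\overline g} = \overline H$ means $H^g N = HN$, and since $H^g \leq H^g N$, this forces $H^g \leq HN$. Thus both $H$ and its conjugate $H^g$ are subgroups of $HN$. The key observation is that, inside the group $HN$, the subgroup $H$ is a \emph{Hall $\pi$-subgroup which is also T.I.}, where $\pi = \pi(H)$. Indeed, because $(|H|,|N|)=1$ the normal subgroup $N$ is a $\pi'$-group, so $|H|$ is exactly the $\pi$-part of $|HN| = |H|\,|N|$; and the T.I. property of $H$ in $G$ restricts to the T.I. property of $H$ in the overgroup $HN$. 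The conjugate $H^g$ is again a $\pi$-subgroup of order $|H|$, hence likewise a Hall $\pi$-subgroup of $HN$. Applying Proposition \ref{proposition: key pro} to the group $HN$ yields that $H$ and $H^g$ are conjugate in $HN$, say $H^g = H^h$ for some $h \in HN$.

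Finally I would refine this $HN$-conjugacy to a conjugacy by an element of $N$. Writing $h = xn$ with $x \in H$ and $n \in N$, the factor $x$ normalizes $H$, so $H^g = H^h = H^n$. Consequently $gn^{-1} \in N_G(H)$, and since $\overline n = 1$ we obtain $\overline g = \overline{gn^{-1}} \in \overline{N_G(H)}$, establishing the reverse inclusion and hence the equality. The main point to handle with care is precisely this passage from $HN$-conjugacy down to $N$-conjugacy; it is exactly here that Proposition \ref{proposition: key pro} does the work that would otherwise require the conjugacy part of Schur--Zassenhaus, allowing us to dispense with the odd-order theorem as promised in the preceding remark.
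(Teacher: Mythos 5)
Your proof is correct and follows essentially the same route as the paper: both arguments reduce to the fact that $H$ is a Hall $\pi$-subgroup and T.I. subgroup of $HN$, apply Proposition \ref{proposition: key pro} there to conjugate $H^g$ back to $H$ inside $HN$, and then refine to conjugation by an element of $N$. The paper phrases this last step as a Frattini argument on the full preimage $X$ of $N_{\overline G}(\overline H)$, whereas you carry it out element by element, but the content is identical.
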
    
\begin{proof}
    It is clear that $\overline {N_G(H)}\leq N_{\overline G}(\overline H) $. Let $X$ be the full inverse image of $N_{\overline G}(\overline H)$ in $G$. Then $N_G(H)$ is contained in $X$ and $HN\trianglelefteq X$. By Proposition \ref{proposition: key pro}, every complement of $N$ in $HN$ is a conjugate of $H$. Then Frattini argument implies the equality $N_X(H)N=X$. As $N_X(H)=N_G(H)$, we have $X=N_G(H)N$. Then $\overline X= \overline {N_G(H)}$ as claimed.
\end{proof}

It should be noted that a homomorphic image of a T.I. subgroup need not be T.I. subgroup of the image. Therefore, the following lemma provides a sufficient condition for which image of a T.I. subgroup is also T.I. subgroup.
\begin{lemma}
    \label{lemma: image of T.I}
    
    Let $H$ be a T.I. subgroup of $G$ and $N$ be a normal subgroup of $G$ with $(|N|,|H|)=1$. Then $HN/N$ is a T.I subgroup of $G/N$.
\end{lemma}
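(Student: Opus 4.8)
The plan is to verify the T.I. condition for $\overline{H}=HN/N$ directly. Write $\overline{G}=G/N$, and observe that for $x\in G$ we have $\overline{H}^{\,\overline{x}}=\overline{H^x}$ and, taking preimages, $\overline{H}\cap\overline{H^x}=(HN\cap H^xN)/N$. Setting $D=HN\cap H^xN$, note $N\trianglelefteq D$. It therefore suffices to prove the implication: if $D$ properly contains $N$, then $HN=H^xN$. Granting this, whenever $\overline{H}\cap\overline{H^x}\neq 1$ we get $D\supsetneq N$, hence $HN=H^xN$, hence $\overline{H}=\overline{H^x}=\overline{H}^{\,\overline{x}}$, which is exactly the T.I. property.

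So suppose $D\supsetneq N$. Since $D\leq HN$, the quotient $D/N$ embeds into $HN/N\cong H$ (recall $H\cap N=1$ by coprimeness), so $|D/N|$ divides $|H|$ and in particular is coprime to $|N|$. Thus $N$ is a normal Hall subgroup of $D$, and the existence part of the Schur--Zassenhaus theorem furnishes a complement $L$ of $N$ in $D$. Because $D\neq N$, the group $L$ is nontrivial; and since $|L|=|D/N|$ divides $|H|$, it is a $\pi$-subgroup, where $\pi$ is the set of primes dividing $|H|$.

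The crux is to place $L$ inside conjugates of $H$, and the key observation is that this must be done inside $HN$ and $H^xN$, \emph{not} inside $G$, since $H$ is not assumed to be a Hall subgroup of $G$. As $N$ is a $\pi'$-group, $H$ is a Hall $\pi$-subgroup of $HN$; moreover $H$ is a T.I. subgroup of $HN$, because the T.I. property of $H$ in $G$ restricts to any subgroup of $G$ containing $H$. Applying Proposition \ref{proposition: key pro} to the $\pi$-subgroup $L$ of $HN$ yields $a\in HN$ with $L\leq H^a$, and the same argument applied to the Hall T.I. subgroup $H^x$ of $H^xN$ yields $b\in H^xN$ with $L\leq H^{xb}$.

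Finally, $H^a$ and $H^{xb}$ are $G$-conjugates of $H$ both containing the nontrivial group $L$, so $H^a\cap H^{xb}\neq 1$; since $H$ is a T.I. subgroup of $G$, this forces $H^a=H^{xb}$. Because $a\in HN$ we have $H^a\leq HN$, and $H^a$ is a $\pi$-subgroup meeting the $\pi'$-group $N$ trivially, so $|H^aN|=|H^a|\,|N|=|HN|$ and therefore $H^aN=HN$; symmetrically $H^{xb}N=H^xN$. Combining, $HN=H^aN=H^{xb}N=H^xN$, which completes the implication and hence the proof. I expect the only genuine subtlety to be this bookkeeping with the conjugating elements: one must keep $a$ in $HN$ and $b$ in $H^xN$ so that the two applications of Proposition \ref{proposition: key pro} recover $HN$ and $H^xN$ on the nose, and this is precisely what lets the argument avoid the conjugacy form of Schur--Zassenhaus and thus the odd order theorem.
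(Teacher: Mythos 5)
Your proof is correct. It rests on the same key idea as the paper's own argument -- using Proposition \ref{proposition: key pro} on Hall T.I.\ subgroups as a substitute for the conjugacy part of Schur--Zassenhaus, then invoking the T.I.\ property of $H$ in $G$ to collapse two conjugates with nontrivial intersection -- but the execution differs. The paper works entirely inside $T=HN\cap H^xN$: by Dedekind's lemma $T=N(T\cap H)=N(T\cap H^x)$, so $T\cap H$ and $T\cap H^x$ are two concrete complements of $N$ in $T$, and a single application of Proposition \ref{proposition: key pro} (to the Hall T.I.\ subgroup $T\cap H$ of $T$) conjugates one onto the other by an element of $N$, whence $H^n=H^x$ or $T=N$. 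You instead manufacture an abstract complement $L$ of $N$ in $D$ via the existence part of Schur--Zassenhaus and apply Proposition \ref{proposition: key pro} twice, in the larger groups $HN$ and $H^xN$, to embed $L$ into a conjugate of $H$ and a conjugate of $H^x$; the bookkeeping $a\in HN$, $b\in H^xN$ then recovers $HN=H^aN$ and $H^xN=H^{xb}N$. Your route costs one extra application of the proposition and an appeal to Schur--Zassenhaus that the Dedekind decomposition renders unnecessary, but it is slightly more robust in that it never needs the explicit product decomposition of $D$; both versions succeed in avoiding the odd order theorem for exactly the reason you identify.
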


\begin{proof}
    Set $T=HN\cap H^gN$ for $g\in G$ then $T=T\cap HN=T\cap H^gN$. As $N\leq T$, we have 
    $T=N(T\cap H)=N(T\cap H^g)$. Note that $(|N|,|T\cap H|)=1$ and $T\cap H$ is a T.I. subgroup of $T$. Then the complements $T\cap H$ and $T\cap H^g$ are conjugate by an element of $N$ by Proposition \ref{proposition: key pro}, that is, $(T\cap H)^n=T\cap H^n=T\cap H^g$
    for some $n\in N$. Thus, $T\cap H^n=T\cap H^n\cap H^g=1$ or $H^n=H^g$. If the former holds, then  $T=N$. If the latter holds, then $T=HN$.
    
    So, we have either $T=HN$ or $N$. As a result, $HN/N$ is a T.I. subgroup of $G/N$ because $HN/N \cap H^gN/N=T/N$ is either trivial or equal to $HN/N$.
\end{proof}

 \section{$\pi$-separable groups having nonnormal Hall T.I. subgroups }
 
 This section is devoted to the proofs of Theorem \ref{Gen. of Gow} and Theorem \ref{complement theorem}.
 
 \begin{proof}[Proof of Theorem \ref{Gen. of Gow}]
     
     $a)$ Note that $H$ is a Hall subgroup of $G$ by Lemma 2.1. Then we have $O_{\pi}(G)=1$ as $H$ is a nonnormal T.I. subgroup of $G$. This forces that $O_{\pi'}(G)\neq 1$ since $G$ is $\pi$-separable. Set $\overline G=G/O_{\pi'}(G)$. Notice that $\overline H$ is a T.I subgroup of $\overline G$ by Lemma \ref{lemma: image of T.I}. \\
     Assume that $\overline H$ is not normal in $\overline G$.
     Then $O_\pi(\overline G)$ is trivial. On the other hand $O_{\pi'}(\overline G)$ is also trivial, and so $\overline G$ is trivial which is not the case. 
     Thus, $\overline H$ is normal in $\overline G$. So the lower $\pi$-series of $G$ is as follows;
     $$1<O_{\pi'}(G)<HO_{\pi'}(G)\leq G.$$
     Now $\overline G=N_{\overline G}(\overline H)=\overline{N_G(H)}$ by Lemma \ref{lemma: normalizer}. Therefore, we have $G=O_{\pi'}(G)N_G(H)$ as claimed.

     $b)$ Let $G$ be a minimal counterexample to part $(b)$. Note that if $H\leq K<G$, we may assume that $H \trianglelefteq K $ by the minimality of $G$. Notice that for any $H$-invariant subgroup $R$ of $O_{\pi'}(G)$ with $RH\neq G$, we have $[R,H]=1$ as $H\lhd RH$. In particular, If $HO_{\pi'}(G)\ne G$ then $[H,O_{\pi'}(G)]=1$. As $O_\pi(G)=1$, we have $C_G(O_{\pi'}(G))\leq O_{\pi'}(G)$, and so $H\leq O_{\pi'}(G)$ which is a contradiction. Hence the equality $G=O_{\pi'}(G)H$ holds. Lemma \ref{lemma:p-invariant} guaranties the existence of an $H$-invariant Sylow $p$-subgroup $P$ of $O_{\pi'}(G)$ for any prime $p$ dividing the order of $O_{\pi'}(G)$. If $P\neq O_{\pi'}(G)$, then $[P,H]=1$. Since $p$ is arbitrary, we obtain $[H,O_{\pi'}(G)]=1$ which is impossible. Thus, $G=PH$ where $P=O_{\pi'}(G)$. 
     
 If $[P,H]\neq P$, then $[P,H,H]=1$. Due to coprimeness, we have $[P,H]=1$ which is not the case. Thus $[P,H]=P$, and hence $C_{P/P'}(H)=1$ because $P/P'=[P/P',H]$. Note that $H$ is a T.I. subgroup of $\overline G=(P/P')H$ by Lemma \ref{lemma: image of T.I}. It follows that $\overline G$ is a Frobenius group as $N_{\overline G}(H)=C_{P/P'}(H)H=H$, that is, $H$ is a self normalizing T.I. subgroup of $\overline G$, completing the proof of the first part of $(b)$. 
     
 Suppose next that $O_{\pi'}(G)$ is solvable. Set $L=[O_{\pi'}(G),H]$. It is almost routine to check that action of $H$ on $L/L'$ is Frobenius by using Lemma \ref{lemma: image of T.I}. Note that $L$ is normalized by both $N_G(H)$ and $O_{\pi'}(G)$, and so $L$ is normal in $G=O_{\pi'}(G)N_G(H)$. Therefore, we observed that any chief factor of $G$ between $L$ and $L'$ is a chief factor on which the action of $H$ is Frobenius.\\
 $c)$ Assume that $O_{\pi'}(G)$ is solvable and $H$ does not contain a subgroup isomorphic to a $SL(2,5)$. As $H$ is a Frobenius complement by part (b), we observe that $H$ is solvable. Set $\overline G= G/O_{\pi'}(G)$. Now $\overline{G}=\overline{N_G(H)}= \overline H \rtimes \overline{Q}$ where $N_G(H)=HQ$ by part (a). Therefore, it suffices to show that $\overline Q$ is solvable. Note that $\overline Q$ acts faitfully on $\overline H$ as $C_{\overline G}(\overline H)\leq \overline H$. Due to coprimeness, for each $p\in \pi$, there exists a $\overline Q$-invariant Sylow $p$-subgroup $P$ of $\overline H$. Since $P$ is either cyclic or generalized quaternion, the group $\overline Q/C_{\overline Q}(P)$ is solvable. It follows now that $\overline Q\cong \overline Q/(\bigcap_{p\in \pi} C_{\overline Q}(P))$ is solvable, establishing the claim.\\
     \end{proof}
     \begin{remark}
 Under the hypothesis of Theorem \ref{Gen. of Gow}, we have $G=O_{\pi'}(G)N_G(H)$. On the other hand, by Schur-Zassenhaus theorem, $H$ has a complement in $N_G(H)$, say $Q$. Set $O=O_{\pi'}(G)$. Then we have the equality $G=OHQ$. Note that this need not be a semidirect product as $O\cap Q$ may not be trivial. From now on, we decompose $G$ as $OHQ$ for simplicity whenever the hypothesis holds. 
 
 \iffalse   Let $K$ be a double Frobenius group. Then $K=(A\rtimes B)\rtimes C$  where $AB$ and $BC$ are Frobenius group. $B$ is a nonnormal T.I. subgroup of $K$ and $K=AN_K(B)$ as $N_K(B)=BC$. \fi 
 
 The structure of $G$, in somehow, resembles the structure of a double Frobenius group. More precisely, the following theorem shows that there is a factor group of $G$ containing double Frobenius groups under some additional hypothesis.
     \end{remark}

    \begin{theorem}
        \label{doubleFrob}
      Assume that the hypothesis of Theorem \ref{Gen. of Gow} hold. Assume further that $H$ is of odd order with $[O,H]=O$ and that $O$ is solvable with $Q\nleq O'$. Set $\overline G=G/O'$. Then\\
       $a)$ $\overline{G}=(\overline O \rtimes \overline{H})\rtimes \overline{Q} $;\\
       $b)$  $\overline Q$ is an abelian group acting faithfully on  $\overline H$; \\
       $c)$ $\overline{O}{[\overline H,\beta]}\langle \beta \rangle$ is a double Frobenius group for every element $\beta \in \overline Q$ of prime order.
\end{theorem}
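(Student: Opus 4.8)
The plan is to read the three assertions off the internal decomposition $G=OHQ$ (with $O=O_{\pi'}(G)$ and $N_G(H)=HQ$ as fixed in the Remark following Theorem~\ref{Gen. of Gow}), transported into $\overline G=G/O'$, exploiting throughout that $\overline O=O/O'$ is abelian and that $H$ acts on it without nonzero fixed points. For part $(a)$ I would first record the two bookkeeping facts $\overline O\cap\overline H=1$ and $(\overline O\,\overline H)\cap\overline Q=\overline O\cap\overline Q$ (the latter because $\overline Q$ is a $\pi'$-group while $\overline O$ is the unique Hall $\pi'$-subgroup of $\overline O\,\overline H$). Since $OH=HO_{\pi'}(G)$ is normal in $G$ (it is the relevant term of the lower $\pi$-series in Theorem~\ref{Gen. of Gow}), $\overline O\,\overline H$ is normal in $\overline G$, so the whole claim reduces to $\overline O\cap\overline Q=1$, i.e. $O\cap Q\le O'$. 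This is the one real point: $R:=O\cap Q=O\cap N_G(H)$ is a normal $\pi'$-subgroup of $N_G(H)$, so $[R,H]\le R\cap H=1$ and $R$ centralizes $H$; but $[O,H]=O$ forces $C_{\overline O}(H)=1$ by coprime action on the abelian group $\overline O$, whence $\overline R\le C_{\overline O}(H)=1$ and $R\le O'$. This gives $\overline G=(\overline O\rtimes\overline H)\rtimes\overline Q$.

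For part $(b)$ the key identity is $C_{\overline Q}(\overline H)=\overline{C_Q(H)}$: if $x\in Q$ and $\overline x$ centralizes $\overline H$, then $[x,h]\in H$ (as $H\trianglelefteq N_G(H)$) and $[x,h]\in O'$, so $[x,h]\in H\cap O'=1$ and $x\in C_Q(H)$; the converse is clear. Thus faithfulness is exactly $C_Q(H)\le O'$, which I would obtain in two steps. First, in $G/O$ one has $O_{\pi'}(G/O)=1$ (as $O$ is the full $\pi'$-core) and $HO/O=O_\pi(G/O)$ (a Hall $\pi$-subgroup of the preimage is a conjugate of $H$ by Proposition~\ref{proposition: key pro}, and $HO/O$ is normal); the standard $\pi$-separable centralizer bound $C_{G/O}(O_\pi(G/O))\le O_\pi(G/O)$ then forces the $\pi'$-group $C_Q(H)O/O$ to be trivial, so $C_Q(H)\le O$. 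Second, $C_Q(H)\le O\cap Q=R\le O'$ by part $(a)$. Abelianness of $\overline Q$ is then routine: $\overline Q$ acts faithfully and coprimely on $\overline H\cong H$, which by Theorem~\ref{Gen. of Gow}$(b)$ is an odd-order Frobenius complement and hence has all Sylow subgroups cyclic; choosing $\overline Q$-invariant Sylow subgroups $P_p$ of $\overline H$, each $\overline Q/C_{\overline Q}(P_p)\hookrightarrow\mathrm{Aut}(P_p)$ is abelian while $\bigcap_p C_{\overline Q}(P_p)=C_{\overline Q}(\overline H)=1$.

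For part $(c)$, fix $\beta\in\overline Q$ of prime order; here ``double Frobenius'' means exactly that $\overline O\rtimes[\overline H,\beta]$ is Frobenius with kernel $\overline O$ and that $[\overline H,\beta]\rtimes\langle\beta\rangle$ is Frobenius with kernel $[\overline H,\beta]$. The first layer is the easy half: by Lemma~\ref{lemma: image of T.I} $\overline H$ is a T.I. subgroup of $\overline G$, and since $\overline O$ is abelian with $C_{\overline O}(\overline H)=1$ one gets $N_{\overline O\,\overline H}(\overline H)=\overline H$, so Frobenius's theorem makes $\overline O\,\overline H$ a Frobenius group; hence every nonidentity element of $\overline H$, in particular of $[\overline H,\beta]$ (nontrivial by faithfulness), acts fixed-point-freely on $\overline O$. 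The substantive half is $C_{[\overline H,\beta]}(\beta)=1$, i.e. $\overline H=[\overline H,\beta]\rtimes C_{\overline H}(\beta)$. I would derive this from the metacyclic structure of the Z-group $\overline H$: write $\overline H=\langle a\rangle\rtimes\langle b\rangle$ with $\langle a\rangle$ the normal (hence characteristic) Hall subgroup containing $\overline H'$, and take the complement $\langle b\rangle$ to be $\beta$-invariant (coprime Schur--Zassenhaus). On the cyclic groups $\langle a\rangle,\langle b\rangle$ coprime action splits fixed points from commutators; the only danger is that conjugation pushes the $\langle a\rangle$-part of $[\overline H,\beta]$ into $C_{\langle a\rangle}(\beta)$. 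Preservation of the relation $bab^{-1}=a^{c}$ by $\beta$ forces $\beta$ to act on $\langle b\rangle$ by $b\mapsto b^{j}$ with $j\equiv1$ modulo the order of $c$, so $c^{\,j-1}\equiv1$ and the danger vanishes; one then gets $[\overline H,\beta]\cap\langle a\rangle=[\langle a\rangle,\beta]$ (on which $\beta$ is fixed-point-free) and that the image of $C_{[\overline H,\beta]}(\beta)$ in $\langle b\rangle$ lies in $[\langle b\rangle,\beta]\cap C_{\langle b\rangle}(\beta)=1$, whence $C_{[\overline H,\beta]}(\beta)=1$.

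I expect this last step, $C_{[\overline H,\beta]}(\beta)=1$, to be the main obstacle. Fixed-point-freeness of $\beta$ on $[\overline H,\beta]$ fails for general coprime actions, since it is the splitting $\overline H=[\overline H,\beta]\rtimes C_{\overline H}(\beta)$, which need not hold; the argument genuinely uses both that $\overline H$ is a Z-group and the numerical constraint the defining relation imposes on how $\beta$ can act on $\overline H/\overline H'$. Everything else is bookkeeping with coprime action, Dedekind's identity, and the two structural inputs already available, namely that $\overline H$ is a self-normalizing T.I. subgroup of $\overline O\,\overline H$ and that $H$ is an odd-order Frobenius complement.
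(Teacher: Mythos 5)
Your proposal is correct, and for parts $(a)$ and $(b)$ it follows essentially the same route as the paper: $(a)$ reduces to $\overline O\cap\overline Q=1$, obtained from $C_{\overline O}(\overline H)=1$ (which follows from $[O,H]=O$ by coprime action), and $(b)$ gets faithfulness from $HO/O=O_\pi(G/O)$ together with $(a)$, then abelianness from the cyclicity of the Sylow subgroups of the odd-order Frobenius complement $\overline H$. The paper deduces abelianness slightly differently --- $\overline Q$ acts faithfully on $[\overline H,\overline Q]$, which is cyclic by Proposition \ref{proposition: Fusion proposition}$(c)$, so $\overline Q$ embeds in the automorphism group of a cyclic group --- but your embedding of $\overline Q$ into $\prod_p\mathrm{Aut}(P_p)$ is an equally valid variant.

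The one substantive divergence is in part $(c)$, where you identify $C_{[\overline H,\beta]}(\beta)=1$ as ``the main obstacle'' and build a hands-on argument from the metacyclic presentation of the Z-group $\overline H$. This is exactly the content of Proposition \ref{proposition: Fusion proposition}$(b)$: since every Sylow subgroup of $\overline H$ is cyclic and $\beta$ acts coprimely, $\overline H=[\overline H,\beta]\rtimes C_{\overline H}(\beta)$, whence $\beta$ is fixed-point-free on $[\overline H,\beta]$ in one line --- and this is precisely how the paper concludes. Your ad hoc derivation does reach the right place, but it is more labored than necessary; in particular the ``danger'' you worry about (conjugation moving the $\langle a\rangle$-part of $[\overline H,\beta]$ into $C_{\langle a\rangle}(\beta)$) cannot occur, because $[\langle a\rangle,\beta]$ and $C_{\langle a\rangle}(\beta)$ are subgroups of the cyclic group $\langle a\rangle$ and hence characteristic in it, so they are automatically $\langle b\rangle$-invariant and the splitting $[\overline H,\beta]=[\langle a\rangle,\beta]\,[\langle b\rangle,\beta]$ with trivial $\beta$-fixed points follows without the numerical constraint on $j$. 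In short: the proof is sound, but you re-proved a special case of a proposition the paper set up for exactly this purpose (and which you yourself invoke elsewhere).
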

\begin{proof}
         $a)$ It is enough to show that $\overline O \cap \overline Q=1$. Note that $C_{\overline{O}}(\overline H) =1$  as $[\overline{O},\overline{H}]=\overline{[O,H]}=\overline{O}$. For $x\in \overline Q\cap \overline O$, we have $[x,\overline H]\leq \overline  H\cap \overline O =1$. Hence, $x\in C_{\overline{O}}(\overline H)=1$ and $(a)$ follows.
         
         $b)$ Notice that the action of ${\overline Q \overline O}/ \overline O$ on  ${\overline H \overline O}/ \overline O$ is faithful as ${\overline H \overline O}/ \overline O = O_{\pi}(\overline G/\overline O)$. Then $\overline Q$ acts faithfully on $\overline H$, and so the action of $\overline Q$ on $\overline [\overline H ,\overline Q]$ is also faithful. 
         Every Sylow subgroup of $\overline H$ is cyclic because $H$ is a Frobenius complement of odd order. By Proposition \ref{proposition: Fusion proposition}, we get $[\overline H ,\overline Q]$ is cyclic, and hence $\overline Q$ is abelian.
         
         $c)$ Notice that $\overline Q$ is nontrivial. Let $\beta\in \overline Q$ of prime order. $[\overline H,\beta]$ is nontrivial since $\overline Q$ is faithful on $\overline H$. The group $\overline O \overline H$ is Frobenius as $\overline H$ is a self normalizing T.I. subgroup of $\overline O \overline H$. Then $\overline O [\overline H,\beta]$ is also a Frobenius group. By Proposition \ref{proposition: Fusion proposition}(b), $\beta$ acts fixed point freely on $[\overline H,\beta]$, and so $\overline{O}{[\overline H,\beta]}\langle \beta \rangle$ is a double Frobenius group as claimed.
\end{proof}
 
 We close this section by the proof of Theorem \ref{complement theorem}.
 
 \begin{proof}[Proof of Theorem \ref{complement theorem}]
     $H$ is a Frobenius complement by Theorem \ref{Gen. of Gow}(b), and so every Sylow subgroup of $H$ is cyclic as Sylow $2$-subgroup of $H$ is abelian. It follows by Proposition \ref{proposition: Fusion proposition} that $C_H(Q)$ is a Hall subgroup of $H$. Then $C_H(Q)$ is also a Hall subgroup of $G$. Set $N=O[H,Q]Q$. Clearly $N$ is a group. Proposition \ref{proposition: Fusion proposition} yields that $C_H(Q)$ and $[H,Q]$ have coprime orders, and hence  $C_H(Q)$ and $N$ have coprime orders. Then $C_H(Q)\cap N=1$. We also observe that $C_H(Q)$ normalizes $N$ and  $C_H(Q) N=G$ as $G=OHQ$. Hence, $N$ is the desired normal complement for $C_H(Q)$ in $G$. 
 \end{proof}
 
 \section{A Generalization of Frobenius' Theorem}
 
 In this section, we first prove Theorem \ref{Gen.Frobenius} which gives a full answer to Question B. We should note that this theorem can be proven by using the idea of Brauer-Suzuki theorem for the character theoretic part, instead we directly appeal to Brauer-Suzuki theorem for character theoretic part. Frobenius Theorem directly follows from this theorem as self normalizing T.I. subgroup naturally satisfies the hypothesis of the theorem.

 \begin{proof}[Proof of Theorem \ref{Gen.Frobenius}]
     It is trivial to show that if $H$ has a normal complement in $G$ then $H$ has a normal complement in $N_G(H)$.\\
     Assume now that $H$ has a normal complement $Q$ in $N_G(H)$. Then $N_G(H)=Q H$ and $[Q,H]=1$. We claim first that $H$ controls $G$-fusion in $H$: Let $x$ and $x^g$ be elements of $H$ for nonidentity $x\in H$ and for some $g\in G$. Now $x\in H\cap H^{g^{-1}}$, and so $H=H^{g^{-1}}$ as $H$ is a T.I. subgroup. Thus, $g\in N_G(H)$, that is, $g=sh$ for $s\in Q$ and $h\in H$. It follows that $x^g=x^{sh}=x^h$ establishing the claim. Note that by Lemma \ref{lemma: norm}, $H$ is a Hall $\pi$-subgroup of $G$  for the prime set $\pi$ of $H$. Then every elementary $\pi$-groups is contained in a conjugate of $H$ by Proposition \ref{proposition: key pro}. Now appealing to Brauer-Suzuki Theorem we see that $G$ has a normal $\pi$-complement.\\
 Now assume $H$ has normal complement in $N_G(H)$ and $N_G(H)<G$. By the argument above, $G$ has a normal $\pi$-complement. It follows that $G$ is $\pi$-separable, and hence $H$ is a Frobenius complement by $(b)$ part of Theorem \ref{Gen. of Gow}.
 \end{proof}
 
 \begin{corollary}[Frobenius]
     Let $H$ be a proper subgroup of $G$ with $H\cap H^x=1$ for all $x\in G-H$. Then the set $N=(G- \bigcup_{g\in G} H^g)\cup \{1\}$ is a normal subgroup of $G$ where $N$ is a complement for $H$ in $G$.

 \end{corollary}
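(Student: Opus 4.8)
The plan is to recognize the hypothesis as a special case of Theorem \ref{Gen.Frobenius} and then pin down the promised normal complement by a direct count. First I would observe that the condition $H\cap H^x=1$ for all $x\in G-H$ forces $H$ to be a self normalizing T.I. subgroup: for any $x\in G$ the intersection $H\cap H^x$ equals $H$ or $1$, so $H$ is a T.I. subgroup; and if $n\in N_G(H)$ then $H\cap H^n=H$, which is nontrivial (we may assume $H\neq 1$, the case $H=1$ being vacuous), whence $n\in H$ by hypothesis. Thus $N_G(H)=H$, and in particular $H$ is nonnormal in $G$ since $H$ is proper.

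With $N_G(H)=H$, the hypotheses of Theorem \ref{Gen.Frobenius} hold trivially: $H$ is a (Hall) subgroup of $N_G(H)=H$, and the trivial subgroup is a normal complement of $H$ in $N_G(H)$. Theorem \ref{Gen.Frobenius} then yields a normal complement $M$ of $H$ in $G$; that is, $M\trianglelefteq G$, $G=MH$ and $M\cap H=1$.

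It remains to identify $M$ with the set $N$. Since $M$ is normal and $M\cap H=1$, for every $g\in G$ we have $M\cap H^g=(M\cap H)^g=1$, so $M$ meets no conjugate of $H$ outside the identity; hence $M\subseteq N$. For the reverse inclusion I would compare orders. Because $N_G(H)=H$, the subgroup $H$ has exactly $|G:H|=|M|$ conjugates, and by the T.I. property these pairwise intersect only in $1$, so $|\bigcup_{g\in G} H^g|=|M|(|H|-1)+1$. Consequently $|N|=|G|-|M|(|H|-1)=|M|$, and $M\subseteq N$ together with $|M|=|N|$ gives $M=N$. Thus $N$ is precisely the normal complement furnished by Theorem \ref{Gen.Frobenius}, and it is a subgroup complementing $H$ in $G$ as claimed.

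Since essentially all the work is absorbed into Theorem \ref{Gen.Frobenius}, I do not expect a genuine obstacle here; the only point requiring care is the counting identity, where one must use both that the conjugates of $H$ meet pairwise only in the identity and that $N_G(H)=H$, so that the number of conjugates equals exactly $|M|$.
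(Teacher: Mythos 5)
Your proposal is correct and follows essentially the same route as the paper, which simply notes that a self-normalizing T.I. subgroup satisfies the hypotheses of Theorem \ref{Gen.Frobenius} trivially (with trivial normal complement in $N_G(H)=H$) and so acquires a normal complement in $G$. The only addition is your counting argument identifying that complement with the set $N$ --- a detail the paper leaves implicit --- and it is carried out correctly, using that the $|G:H|$ conjugates of $H$ pairwise meet in the identity.
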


 \section*{Acknowledgements}
 I would like to thank G\"{u}lin ERCAN and \.{I}smail \c{S}. G\"{U}LO\u{G}LU for their invaluable support and helpful comments.


\begin{thebibliography}{9}
     \bibitem{1} R.Gow, Some T.I. subgroups of solvable groups. J. London Math. Soc. (2) 12 (1975/76), no. 3, 285286.
     \bibitem{2} I.M. Isaacs, Finite group theory. Graduate Studies in Mathematics, 92. American Mathematical Society, Providence, RI, 2008.
     
     \bibitem{3} I.M. Isaacs, Character theory of finite groups. Corrected reprint of the 1976 original [Academic Press, New York; MR0460423]. AMS Chelsea Publishing, Providence, RI, 2006.
 \end{thebibliography}
\end{document}